\documentclass[12pt, reqno]{amsart}

\usepackage{amsmath, amsfonts, amssymb}

\usepackage[latin1]{inputenc}
\usepackage{latexsym}

\usepackage{graphics}
\usepackage{color}

\usepackage{natbib}

\headheight=6.15pt
\headsep=1cm
\topskip=2cm
\textheight=8.75in
\textwidth=6.5in
\oddsidemargin=0in
\evensidemargin=0in
\topmargin=0in

%%%%%%%%%%%%%%%%%%%%%%%%%%%%%%%%%%%%%%%%%%%%%%%%%%%%%%%%%%%%%%%%%%%%%%%%%%%%%%%

\newenvironment{proofsect}[1]
{\vskip0.1cm\noindent{\bf #1.}}

\newtheorem{theorem}{{\sc Theorem}}[section]

\newtheorem{cor}[theorem]{{\sc Corollary}}

\newtheorem{lemma}[theorem]{{\sc Lemma}}

\theoremstyle{remark}
\newtheorem{remark}[theorem]{{\sc Remark}}
\newtheorem{remarks}[theorem]{{\sc Remarks}}

%
%		Makro-Definitionen
%

\newcommand{\E}{\mathbb E}
\newcommand{\V}{\mathbb V}
\newcommand{\M}{\mathbb M}

\newcommand{\R}{\mathbb R}

\renewcommand{\P}{\mathbb P}

\renewcommand{\qed}{\hfill\ensuremath{\square}}

%%%%%%%%%%%%%%%%%%%%%%%%%%%%%%%%%%%%%%%%%%%%%%%%%%%%%%%%%%%%%%%%%%%%%%%%%%%%%%%

\begin{document}
\title[Moderate deviations in random graphs and Bernoulli random matrices]{Moderate deviations in a 
random graph and for the spectrum of Bernoulli random matrices}
\bigskip
\author[Hanna D\"oring, Peter Eichelsbacher]{}

\maketitle
\thispagestyle{empty}
\vspace{0.2cm}

\centerline{\sc Hanna D\"oring\footnote{Ruhr-Universit\"at Bochum, Fakult\"at f\"ur Mathematik, 
NA 3/67, D-44780 Bochum, Germany, {\tt hanna.doering@ruhr-uni-bochum.de  }}, 
Peter Eichelsbacher\footnote{Ruhr-Universit\"at Bochum, Fakult\"at f\"ur Mathematik, 
NA 3/68, D-44780 Bochum, Germany, {\tt peter.eichelsbacher@ruhr-uni-bochum.de  }}}
\vspace{2 cm}

\pagenumbering{roman}
\maketitle

\pagenumbering{arabic}
\pagestyle{headings}
\bigskip

{ {\bf Abstract:} We prove a moderate deviation principle for subgraph count statistics of Erd\H{o}s-R{\'e}nyi random graphs. 
This is equivalent in showing a moderate deviation principle for the trace of a power of a Bernoulli random matrix. 
It is done via an estimation of the log-Laplace transform and the G\"artner-Ellis theorem. We obtain upper bounds on the upper
tail probabilities of the number of occurrences of small subgraphs.
The method of proof is used to 
show supplemental moderate deviation principles for a class of symmetric statistics, including non-degenerate U-statistics 
with independent or Markovian entries.}
\bigskip
\bigskip

%%%%%%%%%%%%%%%%%%%%%%%%%%%%%%%%%%%%%%%%%%%%%%%%%%%%%%%%%%%%%%%%%%%%%%%%%%%%%%%
\section{Introduction}

\subsection{Subgraph-count statistics}
Consider an Erd\H{o}s-R{\'e}nyi random graph with $n$ vertices, where
for all $\left(n \atop 2\right)$ different pairs of vertices the
existence of an edge is decided by an independent Bernoulli experiment
with probability $p$. For each $i\in\{1,\dots,\left({{n}\atop{2}}\right)\}$,
let $X_i$ be the random variable determining if the edge $e_i$ is present, i.e. $P(X_i=1)=1-P(X_i=0)=p(n) =:p$.
The following statistic counts the number of
subgraphs isomorphic to a fixed graph $G$ with $k$ edges and $l$ vertices
$$
W=
\sum_{1\leq \kappa_1<\dots < \kappa_k \leq \left({{n}\atop{2}}\right)}
  1_{\{(e_{\kappa_1},\dots,e_{\kappa_k})\sim G\}} \left(\prod_{i=1}^k X_{\kappa_i} \right)
\:.
$$
Here $(e_{\kappa_1}, \ldots, e_{\kappa_k})$ denotes the graph with edges
$e_{\kappa_1}, \ldots, e_{\kappa_k}$ present and $A \sim G$ denotes the fact that the subgraph $A$
of the complete graph is isomorphic to $G$.
We assume $G$ to be a graph without isolated vertices and to consist
of $l\geq 3$ vertices and $k\geq2$ edges.
Let the constant $a := \rm{aut}(G)$ denote the order of the automorphism group of $G$.
The number of copies of $G$ in $K_n$, the complete graph with $n$
vertices and $\left(n \atop 2\right)$ edges, is given by
$\left(n \atop l\right) l!/a$ and the expectation of $W$ is equal to
$$
\E[W] = \frac{\left(n \atop l\right) l!}{a} p^k = {\mathcal O}(n^l p^k)
\:.
$$
It is easy to see that $P(W> 0)= o(1)$ if $p\ll n^{-l/k}$.
Moreover, for the graph property that $G$ is a subgraph, the probability that
a random graph possesses it jumps from $0$ to $1$ at the threshold probability
$n^{-1/m(G)}$, where
$$
m(G)= \max \left\{ \frac{e_H}{v_H} : H\subseteq G, v_H >0 \right\},
$$
$e_H, v_H$ denote the number of edges and vertices of $H\subseteq G$,
respectively, see \cite{JLR:2000}.

Limiting Poisson and normal distributions for subgraph counts were studied for 
probability functions $p=p(n)$. 
For $G$ be an arbitrary graph, Ruci{\'n}ski proved in \cite{Rucinski:1988} that $W$ is Poisson convergent
if and only if 
\begin{equation*}
n p^{d(G)}\stackrel{n\to\infty}{\longrightarrow}  0
\quad\text{or}\quad
n^{\beta(G)} p \stackrel{n\to\infty}{\longrightarrow}  0\:.
\end{equation*}
Here $d(G)$ denotes the density of the graph $G$ and 
$$
\beta(G) := \max \biggl\{ \frac{v_G-v_H}{e_G-e_H} : H \subset G \, \biggr\}.
$$

Consider
\begin{equation} \label{NWnorm}
c_{n,p} := \left({{n-2}\atop{l-2}}\right) \frac{2k}{a} (l-2)!
  \sqrt{\left({{n}\atop{2}}\right)p(1-p)} p^{k-1}
\end{equation}
and
\begin{equation}
Z:= \frac{W-\E(W)}{c_{n,p}} =
\frac{
\sum_{1\leq \kappa_1<\dots < \kappa_k \leq \left({{n}\atop{2}}\right)}
  1_{\{(e_{\kappa_1},\dots,e_{\kappa_k})\sim G\}} \left(\prod_{i=1}^k X_{\kappa_i} -
    p^k \right)}
{\left({{n-2}\atop{l-2}}\right) \frac{2k}{a} (l-2)!
  \sqrt{\left({{n}\atop{2}}\right)p(1-p)} p^{k-1}}
\:.
\end{equation}
$Z$ has asymptotic standard normal distribution, if
$n p^{k-1}\stackrel{n\to\infty}{\longrightarrow}  \infty$
and $n^2 (1-p) \stackrel{n\to\infty}{\longrightarrow} \infty$,
see Nowicki, Wierman, \cite{NowickiWierman:1988}. For $G$ be an arbitrary graph with at least
one edge, Ruci{\'n}ski proved in \cite{Rucinski:1988} that
$\frac{W-\E(W)}{\sqrt{\V(W)}}$
converges in distribution to a standard normal distribution if and only if
\begin{equation}
n p^{m(G)}\stackrel{n\to\infty}{\longrightarrow}  \infty
\quad\text{and}\quad
n^2 (1-p) \stackrel{n\to\infty}{\longrightarrow}  \infty\:.
\end{equation}
Here and in the following $\V$ denotes the variance of the corresponding random variable.
Ruci{\'n}ski closed the book proving asymptotic normality in applying the method of moments.
One may wonder about the normalization \eqref{NWnorm} used in \cite{NowickiWierman:1988}.
The subgraph count $W$ is a sum of dependent random variables, for which the exact calculation
of the variance is tedious. In \cite{NowickiWierman:1988}, the authors approximated $W$ by a
projection of $W$, which is a sum of independent random variables. For this sum the variance
calculation is elementary, proving the denominator \eqref{NWnorm} in the definition of $Z$.
The asymptotic behaviour of the variance of $W$ for any $p=p(n)$ is summarized in Section 2
in \cite{Rucinski:1988}.
The method of martingale differences used by Catoni in \cite{Catoni:2003} enables on the conditions
$n p^{3(k-\frac{1}{2})}\stackrel{n\to\infty}{\longrightarrow}  \infty$
and $n^2 (1-p) \stackrel{n\to\infty}{\longrightarrow} \infty$ to give an alternative proof of the
central limit theorem, see remark \ref{CLTsubgraph}.

A common feature is to prove large and moderate deviations, namely, the asymptotic computation
of small probabilities on an exponential scale. Considering the moderate scale is the interest 
in the transition from a result of convergence in distribution like a central limit theorem-scaling
to the large deviations scaling. Interesting enough proving that the subgraph count random variable
$W$ satisfies a large or a moderate deviation principle is an unsolved problem up to now.
The main goal of this paper is to prove a moderate deviation principle
for the rescaled $Z$, filling a substantial gap in the literature on asymptotic subgraph count
distributions, see Theorem \ref{theorem1}.
Before we recall the definition of a moderate deviation principle and state our result,
let us remark, that exponentially small probabilities have been studied extensively in the literature.
A famous upper bound for {\it lower tails} was proven by Janson \cite{Janson:1990}, applying the 
FKG-inequality. This inequality leads to good upper bounds for the probability of nonexistence $W=0$.
Upper bounds for {\it upper tails} were derived by Vu \cite{Vu:2001}, Kim and Vu \cite{KimVu:2004} and  recently by Janson,
Oleskiewicz and Ruci{\'n}ski \cite{JansonOleskiewiczRucinski:2004} and in \cite{JansonRucinski:2004} by Janson and Ruci{\'n}ski.
A comparison of seven techniques proving bounds for {\it the infamous} upper tail can be found in \cite{JansonRucinski:2002}.
In Theorem \ref{concentration} we also obtain upper bounds on the upper tail probabilities of $W$. 
\medskip

Let us recall the definition of a large deviation principle (LDP). A
sequence of probability measures $\{(\mu_n), n\in \mathbb N\}$ on a
topological space $\mathcal X$ equipped with a $\sigma$-field $\mathcal
B$ is said to satisfy the LDP with speed $s_n\nearrow \infty$ and
good rate function $I(\cdot)$ if the level sets $\{x: I(x)\leq
\alpha\}$ are compact for all $\alpha\in[0,\infty)$ and for all
$\Gamma\in\mathcal B$ the lower bound
$$
\liminf_{n\to\infty}  \frac{1}{s_n} \log \mu_n(\Gamma)
\geq - \inf_{x\in \operatorname{int}(\Gamma)} I(x)
$$
and the upper bound
$$
\limsup_{n\to\infty}  \frac{1}{s_n} \log \mu_n(\Gamma)
\leq - \inf_{x\in \operatorname{cl}(\Gamma)} I(x)
$$
hold. Here $\operatorname{int}(\Gamma)$ and
$\operatorname{cl}(\Gamma)$ denote the interior and closure of
$\Gamma$ respectively. We say a sequence of random variables satisfies
the LDP when the sequence of measures induced by these variables
satisfies the LDP. Formally a moderate deviation principle is nothing
else but the LDP. However, we will speak about a moderate deviation 
principle (MDP) for a sequence of random variables, whenever the scaling
of the corresponding random variables is between that of an ordinary Law
of Large Numbers and that of a Central Limit Theorem.

In the following, we state one of our main results, a moderate deviation
principle for the rescaled subgraph count statistic $W$ when $p$ is fixed, 
and when the sequence $p(n)$ converges to 0 or 1 sufficiently slowly.

\begin{theorem}\label{theorem1}
Let $G$ be a fixed graph without isolated vertices, consisting of $k \geq 2$ edges and $l\geq 3$ vertices. The
sequence $(\beta_n)_n$ is assumed to be increasing with
\begin{equation}\label{beta}
n^{l-1} p^{k-1} \sqrt{ p(1-p) } \ll \beta_n
\ll n^{l} \left( p^{k-1} \sqrt{p(1-p)} \right)^4\:.
\end{equation}
Then the sequence $(S_n)_n$ of subgraph count statistics
$$
S:=S_n:=
\frac{1}{\beta_n} \sum_{1\leq \kappa_1<\dots < \kappa_k \leq \left({{n}\atop{2}}\right)}
  1_{\{(e_{\kappa_1},\dots,e_{\kappa_k})\sim G\}} \left(\prod_{i=1}^k X_{\kappa_i} -
    p^k \right)
$$
satisfies a moderate deviation principle with speed
\begin{equation}\label{gamma}
s_n = \frac{\left(\frac{2k}{a} (l-2)!\right)^2 \beta_n^2}{c_{n,p}^2}
=\frac{1}{\left({{n-2}\atop{l-2}}\right)^2 \left({{n}\atop{2}}\right)} \frac{1}{p^{2k-1}(1-p)} \beta_n^2
\end{equation}
and rate function $I$ defined by
\begin{equation}\label{ratef}
I(x)= \frac{x^2}{2\bigl(\frac{2k}{a} (l-2)!\bigr)^2}\:.
\end{equation}
\end{theorem}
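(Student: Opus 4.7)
The plan is to apply the G\"artner--Ellis theorem. Writing $\sigma := \tfrac{2k}{a}(l-2)!$, so that by \eqref{gamma} one has $s_n = \sigma^2\beta_n^2/c_{n,p}^2$ and by \eqref{ratef} the claimed rate function is $I(x)=x^2/(2\sigma^2)$, it is enough to show that, for every $t\in\mathbb{R}$,
\[
\Lambda_n(t) := \frac{1}{s_n}\log \mathbb{E}\bigl[\exp(t\, s_n S_n)\bigr]\;\longrightarrow\;\frac{t^2\sigma^2}{2},
\]
since the right-hand side is smooth and strictly convex in $t$ with Legendre dual $I$.

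The first step is an orthogonal decomposition of $\beta_n S_n = W-\mathbb{E}[W]$. Setting $Y_e := X_e - p$ and expanding $\prod_{i=1}^k(p+Y_{\kappa_i}) - p^k$, one obtains
\[
\beta_n S_n = \sum_{r=1}^{k}L_r,\qquad L_r := p^{k-r}\!\!\sum_{\{j_1,\dots,j_r\}} N_r(j_1,\dots,j_r)\,Y_{j_1}\cdots Y_{j_r},
\]
where $N_r$ counts the copies of $G$ in $K_n$ whose edge set contains $\{j_1,\dots,j_r\}$. A double-counting argument gives $N_1(e) = \sigma\binom{n-2}{l-2}$ for every edge $e$, hence
\[
L_1 = \sigma\binom{n-2}{l-2}p^{k-1}\sum_{e}Y_e,\qquad\operatorname{Var}(L_1) = c_{n,p}^2,
\]
in agreement with \eqref{NWnorm}; the terms $L_2,\dots,L_k$ are multilinear polynomials of degrees $2,\dots,k$ in the independent centred Bernoullis $(Y_e)$.

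The second step handles the dominant linear term. By independence of the $Y_e$, the lower bound on $\beta_n$ in \eqref{beta} guarantees that the per-edge argument of the moment generating function tends to zero, so a second-order Taylor expansion of $\log \mathbb{E}\exp\bigl((t s_n/\beta_n) L_1\bigr)$ is justified (its linear part dies by centring). Using $\operatorname{Var}(L_1) = c_{n,p}^2$ together with the definition of $s_n$ in \eqref{gamma}, one gets $s_n^{-1}\log \mathbb{E}\exp\bigl((ts_n/\beta_n) L_1\bigr)\to \tfrac12 t^2\sigma^2$, which is the desired limit at the level of $L_1$.

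The main technical obstacle is the third step: proving that the higher-order chaos terms $L_r$, $r\ge 2$, are exponentially negligible on the scale $\beta_n$, i.e.
\[
\limsup_{n\to\infty}\frac{1}{s_n}\log \mathbb{P}\bigl(|L_r| > \delta\beta_n\bigr) = -\infty\qquad(r = 2,\dots,k,\ \delta>0).
\]
Combined with step 2 through a H\"older/truncation argument applied to $\mathbb{E}\exp(t s_n S_n)$, this reduces $\Lambda_n(t)$ to the already-computed limit. Each $L_r$ is a multilinear form of degree $r$ in independent bounded variables with variance of order $n^{2l-r}p^{2k-r}(1-p)^r$, a factor $\bigl(np(1-p)\bigr)^{r-1}$ smaller than $\operatorname{Var}(L_1)$; I would control its tail either by the hypercontractive moment bound $\|L_r\|_q \le (q-1)^{r/2}\|L_r\|_2$ on the discrete cube, optimized in $q$, or by a Bernstein/Bennett-type estimate exploiting the boundedness of the $Y_e$. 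The upper bound $\beta_n \ll n^l\bigl(p^{k-1}\sqrt{p(1-p)}\bigr)^4$ in \eqref{beta} is precisely what converts these variance/moment bounds into super-exponential decay at speed $s_n$, after which G\"artner--Ellis delivers the MDP with rate function $I$ from \eqref{ratef}.
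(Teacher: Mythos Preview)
Your route is genuinely different from the paper's. The paper never decomposes $W-\E W$ into chaos pieces $L_1,\dots,L_k$; instead it applies a single non-asymptotic estimate of Catoni for the log-Laplace transform of a bounded function of independent variables (Theorem~\ref{thmcatoni}),
\[
\Bigl|\log\E e^{sZ}-\tfrac{s^2}{2}\V Z\Bigr|\le s^3 d(n),
\]
where $d(n)$ is built from the first and second partial finite differences $\Delta_i Z$ and $\Delta_i\Delta_j Z$. Lemma~\ref{lemma1} bounds these differences combinatorially (counting copies of $G$ through one, respectively two, prescribed edges), which yields $d(n)\lesssim n^{-1}p^{-3(k-1/2)}(1-p)^{-3/2}$. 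With $t_n=\beta_n/c_{n,p}$ and $s_n=\sigma^2\beta_n^2/c_{n,p}^2$ one checks $s_n^2 d(n)/t_n^3\to 0$ \emph{exactly} under the upper bound $\beta_n\ll n^l(p^{k-1}\sqrt{p(1-p)})^4$, and G\"artner--Ellis finishes. So the range \eqref{beta} is tailored to Catoni's third-order remainder, not to tail bounds for chaoses.

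Your proposal has a real gap in Step~3. First, the hypercontractive inequality $\|L_r\|_q\le (q-1)^{r/2}\|L_r\|_2$ you quote is the $p=\tfrac12$ case; on the $p$-biased cube the correct constant involves an extra factor depending on $p(1-p)$, and this matters precisely in the regime $p=p(n)\to 0$ or $1$ that the theorem covers. Second, even granting a correct hypercontractive tail of type $\exp\bigl(-c(\delta\beta_n/\|L_r\|_2)^{2/r}\bigr)$, you have not verified that this beats $e^{-Ms_n}$ for every $M$ under the stated upper bound on $\beta_n$; the assertion that \eqref{beta} is ``precisely'' what is needed is unchecked, and since \eqref{beta} was engineered for Catoni's $d(n)$ there is no reason it should coincide with the threshold produced by chaos tails of each degree $r$. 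Third, the passage from ``$L_1$ has the right Laplace limit and $L_r$ are negligible'' to ``$S_n$ has the right Laplace limit'' via an unspecified ``H\"older/truncation argument'' is delicate: the clean way is exponential equivalence of $S_n$ and $L_1/\beta_n$ at speed $s_n$, then transfer the MDP for the i.i.d.\ sum $L_1/\beta_n$; mixing this with a direct computation of $\E e^{ts_nS_n}$ is awkward because $L_1$ and $L_{\ge 2}$ are not independent.
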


\begin{remarks}
\begin{enumerate}
\item
Using $\left({{n-2}\atop{l-2}}\right)^2 \left({{n}\atop{2}}\right)
\leq n^{2(l-1)}$, we obtain
$s_n \geq \left( \frac{\beta_n}{n^{l-1} p^{k-1} \sqrt{p (1-p)}} \right)^2$;
therefore the condition
$$
n^{l-1} p^{k-1} \sqrt{p (1-p)} \ll \beta_n
$$
implies that $s_n$ is growing to infinity as $n\to\infty$ and hence
is a speed.

\item
If we choose $\beta_n$ such that
$
\beta_n
\ll n^{l} \left( p^{k-1} \sqrt{p (1-p)} \right)^4
$
and using the fact that $s_n$ is a speed implies that
\begin{equation}\label{condition}
n^2 p^{6k-3} (1-p)^3
\stackrel{n\to\infty}{\longrightarrow}
\infty\:.
\end{equation}
This is a necessary but not a sufficient condition on \eqref{beta}.
\end{enumerate}
\end{remarks}

The approach to prove Theorem \ref{theorem1} yields additionally
to a central limit theorem for $Z=\frac{W-\E W}{c_{n,p}}$,
see remark \ref{CLTsubgraph}, and to a
concentration inequality for $W - \E W$:

\begin{theorem}\label{concentration}
Let $G$ be a fixed graph without isolated vertices, consisting of $k \geq 2$ edges
and $l \geq 3$ vertices and let $W$ be the number of copies of $G$.
Then for every $\varepsilon>0$
\begin{equation*}
P(W-\E W \geq \varepsilon \E W)
\leq
\exp\left( - \frac{const. \varepsilon^2 n^{2l} p^{2k}}{n^{2l-2} p^{2k-1}(1-p) + const. \varepsilon n^{2l-2} p^{1-k} (1-p)^{-1}}
\right)
\:,
\end{equation*}
where $const.$
are only depending on $l$ and $k$.
\end{theorem}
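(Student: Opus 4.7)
My plan is to recover the concentration inequality from the same log-Laplace estimate that drives the proof of Theorem~\ref{theorem1}, but read non-asymptotically. Writing $\Lambda(t):=\log \E e^{t(W-\E W)}$, Markov's exponential inequality gives, for every $t>0$,
\begin{equation*}
P\bigl(W-\E W\geq \varepsilon \E W\bigr)\leq \exp\bigl(-t\varepsilon\E W+\Lambda(t)\bigr),
\end{equation*}
so the task reduces to a Bernstein-type bound on $\Lambda(t)$ valid on a genuine interval $t\in(0,1/M)$, followed by the usual optimization over $t$.

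For the log-Laplace bound I would expand $\Lambda(t)=\sum_{j\geq 2}\kappa_j(W)\,t^j/j!$ and estimate the cumulants $\kappa_j(W)$ by grouping copies $\alpha$ of $G$ according to the combinatorial pattern of shared edges, exactly as in the variance analysis of Nowicki--Wierman and Ruci{\'n}ski. A cumulant bound of the form
\begin{equation*}
|\kappa_j(W)|\leq C^{j}\,j!\,V\,M^{\,j-2},\qquad V\asymp n^{2l-2}p^{2k-1}(1-p),\quad M\asymp n^{l-2}p^{1-2k}(1-p)^{-1},
\end{equation*}
with constants depending only on $k$ and $l$, is the target: the variance scale $V$ already equals $c_{n,p}^2$ up to constants, and summing the geometric series in $tM$ yields
\begin{equation*}
\Lambda(t)\leq \frac{C^{2} t^{2} V}{2(1-CtM)}\qquad (0<t<1/(CM)).
\end{equation*}
The optimal choice $t^{\star}\asymp \varepsilon\E W/(V+M\varepsilon\E W)$ then gives a Bernstein inequality
\begin{equation*}
P\bigl(W-\E W\geq \varepsilon\E W\bigr)\leq \exp\!\left(-\frac{c(\varepsilon\E W)^{2}}{V+M\varepsilon\E W}\right),
\end{equation*}
and plugging in $\E W\asymp n^{l}p^{k}$, $V\asymp n^{2l-2}p^{2k-1}(1-p)$ and $M\varepsilon\E W\asymp \varepsilon\,n^{2l-2}p^{1-k}(1-p)^{-1}$ reproduces the exponent in Theorem~\ref{concentration}.

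The hard part is the combinatorial control of the higher cumulants. While $V$ is governed by the pairs of copies of $G$ that share a single edge, $\kappa_j$ receives contributions from every overlap pattern of $j$ copies in $K_n$, and one must verify that the worst such pattern gives the claimed geometric growth $V\,M^{j-2}$ in $M$ rather than something larger. This is precisely the dependency-graph input already needed for the G\"artner--Ellis computation behind Theorem~\ref{theorem1}: there one only extracts the leading $t^2 V/2$ term in a moderate window for $t$, whereas here one needs to retain the full Bernstein remainder on a fixed interval. The two arguments can therefore be carried out in parallel, with the concentration inequality emerging as a by-product of the same cumulant estimates that power the MDP.
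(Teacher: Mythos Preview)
Your route is genuinely different from the paper's, and the gap is exactly where you flag it. The paper never touches cumulants of order $\geq 3$ or any dependency-graph combinatorics. Instead it reuses Catoni's inequality (Theorem~\ref{thmcatoni}) for the normalized statistic $Z=(W-\E W)/c_{n,p}$: for every $s>0$,
\[
\log \E e^{sZ}\leq \frac{s^2}{2}\V Z + s^3 d(n),
\]
where $d(n)$ is controlled solely by uniform bounds on the \emph{first and second} partial differences $\Delta_i Z$ and $\Delta_i\Delta_j Z$ (Lemma~\ref{lemma1}). Markov's inequality then gives $P(Z\geq\varepsilon)\leq\exp(-s\varepsilon+\tfrac{s^2}{2}\V Z+s^3 d(n))$, and the concrete choice $s=\varepsilon/(\V Z+2d(n)\varepsilon/\V Z)$ yields the stated exponent after translating back to $W$ and inserting the orders of $\E W$, $\V W$, $c_{n,p}$, and $d(n)$. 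Note this is a cubic-in-$s$ bound valid for all $s>0$, not a Bernstein bound on a finite interval.

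So two concrete issues. First, your assertion that ``this is precisely the dependency-graph input already needed for the G\"artner--Ellis computation behind Theorem~\ref{theorem1}'' is a misreading: the MDP proof runs on the very same Catoni bound and the same two-difference estimates of Lemma~\ref{lemma1}; no higher cumulants enter. Second, your plan rests on the factorial cumulant estimate $|\kappa_j(W)|\leq C^j j!\,V M^{j-2}$ uniformly in $j$, which you correctly identify as the hard part and do not prove. Establishing such a bound for subgraph counts is a substantial combinatorial undertaking (connected overlap patterns of $j$ copies, control of the $j!$ factor, etc.), and nothing in the paper supplies it. Your approach could in principle succeed, but it replaces a two-line difference computation by an open-ended all-orders estimate; the paper's point is precisely that Catoni's method sidesteps this.
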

We will give a proof of Theorem \ref{theorem1} and Theorem \ref{concentration} in the end of section
\ref{sectionsubgraphs}.

\begin{remark}
Let us consider the example of counting triangles: $l=k=3$, $a=6$.
The necessary condition \eqref{condition} of the moderate deviation
principle turns to
$$
n^2 p^{15}\longrightarrow \infty
\quad \text{and}\quad n^2(1-p)^3 \longrightarrow \infty
\quad\text{as } n\to\infty\:.
$$
This can be compared to the expectedly weaker necessary and sufficient
condition for the central limit theorem for $Z$ in \cite{Rucinski:1988}:
$$
n p \longrightarrow \infty
\quad \text{and}\quad n^2(1-p) \longrightarrow \infty
\quad\text{as }n\to\infty.
$$
The concentration inequality in Theorem \ref{concentration} for
triangles turns to
\begin{equation*}
P(W-\E W \geq \varepsilon \E W)
\leq
\exp\left( - \frac{const. \varepsilon^2 n^6 p^6}{n^4 p^5 (1-p) + const. \varepsilon n^4 p^{-2} (1-p)^{-1}} \right)
\quad \forall\varepsilon>0\:.
\end{equation*}
Kim and Vu showed in \cite{KimVu:2004} for all $0<\varepsilon\leq 0.1$
and for $p\geq \frac{1}{n} \log{n}$, that
$$
P\left(
\frac{W-\E W}{\varepsilon p^3 n^3} \geq 1
\right)
\leq e^{-\Theta( p^2 n^2 )} \:.
$$
As we will see in the proof of Theorem \ref{concentration}, the bound for $d(n)$ in \eqref{defdn} leads to an additional term of order $n^2 p^8$. 
Hence in general our bounds are not optimal. Optimal bounds were obtained only for some subgraphs. Our
concentration inequality can be compared with the bounds in \cite{JansonRucinski:2002}, which we leave
to the reader. 
\end{remark}

\subsection{Bernoulli random matrices}
Theorem \ref{theorem1} can be reformulated as a moderate deviation principle for traces of a power
of a Bernoulli random matrix. 

\begin{theorem} \label{cor}
Let $X=(X_{i j})_{i,j}$ be a symmetric $n\times n$-matrix of independent
real-valued random variables, Bernoulli-distributed with probability
\begin{equation*}
P(X_{i j}=1) = 1-P(X_{i j}=0)=p(n), \ i < j
\end{equation*}
and $P(X_{ii}=0)=1$, $i=1, \ldots, n$.
Consider for any fixed $k \geq 3$ the trace of the matrix to the power $k$
\begin{equation}
\text{Tr}(X^k)
= \sum_{i_1,\dots, i_{k}=1}^n X_{i_1 i_2} X_{i_2 i_3} \cdots X_{i_{k} i_1}
\:.
\end{equation}
Note that $\text{Tr}(X^k)= 2 \, W$, for $W$ counting circles
of length $k$ in a random graph. We obtain that the sequence $(T_n)_n$ with
\begin{equation}
T_n :=  \frac{\text{Tr}(X^k) - \E[\text{Tr}(X^k)]}{2 \beta_n}
\end{equation}
satisfies a moderate deviation principle for any $\beta_n$ satisfying \eqref{beta} with $l=k$ and with rate function \eqref{ratef}
with $l=k$ and $a=2k$:
\begin{equation}
I(x)= \frac{x^2}{2 \left((k-2)!\right)^2}\:.
\end{equation}
\end{theorem}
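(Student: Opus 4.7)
The plan is to deduce Theorem \ref{cor} from Theorem \ref{theorem1} by specializing to $G = C_k$, the $k$-cycle. This graph has $l = k$ vertices, $k$ edges and automorphism group of order $a = 2k$, so that the prefactor $\frac{2k}{a}(l - 2)!$ collapses to $(k - 2)!$ and both the speed \eqref{gamma} and the rate function \eqref{ratef} specialize to the quantities stated in Theorem \ref{cor}.

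The first step is to expand the trace as a sum over closed walks,
$$
\text{Tr}(X^k) = \sum_{i_1, \ldots, i_k = 1}^n X_{i_1 i_2} X_{i_2 i_3} \cdots X_{i_k i_1},
$$
and split the sum into a \emph{simple} part, where $i_1, \ldots, i_k$ are pairwise distinct, and a \emph{degenerate} part, where at least two indices coincide. Since $X_{ij} = X_{ji}$, each unordered $k$-cycle in the random graph is traversed by exactly $2k$ simple closed walks that contribute the same product of edge variables. Hence the simple part equals $2k \cdot W$, with $W = W_{C_k}$ the subgraph count governed by Theorem \ref{theorem1}.

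The second step is to show that the centered degenerate part is exponentially negligible at speed $s_n$. Since $X_{ii} = 0$ and $X_{ij}^m = X_{ij}$, every nontrivial degenerate walk collapses to a product over at most $k - 1$ distinct edges supported on at most $k - 1$ distinct vertices; the number of such walks is $O(n^{k-1})$ and a variance estimate reveals that their summed contribution is of strictly smaller order than the main simple part. A Laplace transform estimate of the same type as in the proof of Theorem \ref{theorem1} then yields the desired exponential negligibility of the centered degenerate remainder under the scaling \eqref{beta} with $l = k$.

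Combining these two steps, $T_n$ is exponentially equivalent at speed $s_n$ to a fixed scalar multiple of $(W - \E W)/\beta_n$, and the claimed MDP follows from Theorem \ref{theorem1} together with the contraction principle for linear rescaling. The main technical obstacle is the exponential negligibility of the centered degenerate remainder, which is the one step not directly reducible to the subgraph-count setup of Theorem \ref{theorem1}; I would handle it by reusing the same G\"artner-Ellis and log-Laplace machinery, since the remainder is itself a polynomial of smaller combinatorial complexity in the independent Bernoulli variables $(X_{ij})_{i < j}$.
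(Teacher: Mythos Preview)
The paper gives no separate proof of this result: it is presented as a direct reformulation of Theorem~\ref{theorem1}, resting entirely on the identity $\text{Tr}(X^k)=2W$ asserted in the theorem statement, after which the rate function \eqref{ratef} is specialized to $l=k$, $a=2k$.

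Your decomposition into simple and degenerate closed walks is the right idea and is more careful than what the paper actually does. However, your own computation exposes an inconsistency that you do not follow through on. You correctly note that each $k$-cycle subgraph is represented by exactly $2k$ simple closed walks ($k$ starting vertices, two directions), so the simple part of the trace equals $2k\,W$, not $2W$. Even for $k=3$, where there are no degenerate walks at all, one has $\text{Tr}(X^3)=6W\neq 2W$. Thus $T_n$ is exponentially equivalent to $k\,S_n$, and the contraction principle then gives the rate
\[
I_{T}(x)=I\!\left(\frac{x}{k}\right)=\frac{x^2}{2k^2\bigl((k-2)!\bigr)^2},
\]
not the stated $x^2/\bigl(2((k-2)!)^2\bigr)$. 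So your final sentence ``the claimed MDP follows'' is inconsistent with your own (correct) multiplicity count: either the normalization in $T_n$ should be $2k\beta_n$ rather than $2\beta_n$, or the rate function must carry the extra factor $k^{-2}$. You should flag this discrepancy with the paper's statement rather than absorb it silently.

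Separately, the exponential negligibility of the centered degenerate remainder is only sketched. The degenerate contribution is a sum of $O(n^{k-1})$ products of at most $k-1$ distinct Bernoulli edge variables; applying Theorem~\ref{thmcatoni} (or a direct Hoeffding/Bernstein bound) to this polynomial and comparing against the speed $s_n$ under \eqref{beta} with $l=k$ is the natural route, but as written this step is an outline rather than a proof.
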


\begin{remark} \label{remark1}
The following is a famous open problem in random matrix theory: Consider $X$ to be a symmetric 
$n \times n$ matrix with entries $X_{ij}$ ($i \leq j$) being i.i.d., satisfying some exponential integrability. 
The question is to prove for any fixed $k \geq 3$ a LDP for 
$$
\frac{1}{n^k} \text{Tr}(X^k)
$$
and the MDP for 
\begin{equation} \label{mdpremark}
\frac{1}{\beta_n(k)} \bigl( \text{Tr}(X^k) -\E[\text{Tr}(X^k)] \bigr) 
\end{equation}
for a properly chosen sequence $\beta_n(k)$. For $k=1$ the LDP in question immediately follows from Cram\'er's theorem (see \cite[Theorem 2.2.3]{DemboZeitouni:1998}),
since 
$$
\frac 1n \text{Tr}(X) = \frac 1n \sum_{i=1}^n X_{ii}.
$$ 
For $k=2$, notice that
$$
\frac{1}{n^2} \text{Tr}(X^2) = \frac{2}{n^2} \sum_{i < j} X_{ij}^2 + \frac{1}{n^2} \sum_{i=1}^n X_{ii}^2 =: A_n + B_n.
$$
By Cram\'er's theorem we know that $(\tilde{A}_n)_n$ with $\tilde{A}_n := \frac{1}{{n \choose 2}} \sum_{i <j} X_{ij}^2$ satisfies
the LDP, and by Chebychev's inequality we obtain for any $\varepsilon >0$
$$
\limsup_{n \to \infty} \frac 1n \log P(|B_n| \geq \varepsilon) = -\infty.
$$
Hence $(A_n)_n$ and $(\frac{1}{n^2} \text{Tr}(X^2))_n$ are exponentially equivalent (see \cite[Definition 4.2.10]{DemboZeitouni:1998}).
Moreover $(A_n)_n$ and $(\tilde{A}_n)_n$ are exponentially equivalent, since Chebychev's inequality leads to
$$
\limsup_{n \to \infty} \frac 1n \log P(|A_n - \tilde{A}_n| > \varepsilon) = 
\limsup_{n \to \infty} \frac 1n \log P \biggr( |\sum_{i<j} X_{ij}^2| \geq \varepsilon \frac{n^2(n-1)}{2} \biggl) = -\infty.
$$
Applying Theorem 4.2.13 in \cite{DemboZeitouni:1998}, we obtain the LDP for $(1/n^2 \text{Tr}(X^2))_n$ under exponential integrability.
For $k \geq 3$, proving the LDP for $(1/n^k \text{Tr}(X^k))_n$ is open, even in the Bernoulli case. For Gaussian entries $X_{ij}$ with mean 0 and variance
$1/n$, the LDP for the sequence of empirical measures of the corresponding eigenvalues $\lambda_1, \ldots, \lambda_n$, e.g. 
$$
\frac 1n \sum_{i=1}^n \delta_{\lambda_i},
$$
has been established by Ben Arous and Guionnet in \cite{BenArous/Guionnet:1997}. 
Although one has the representation
$$
\frac{1}{n^k} \text{Tr}(X^k) = \frac{1}{n^{k/2}} \text{Tr} \biggl( \frac{X}{\sqrt{n}} \biggr)^k = \frac{1}{n^{k/2}} \sum_{i=1}^n \lambda_i^k,
$$
the LDP cannot be deduced from the LDP of the empirical measure by the contraction principle \cite[Theorem 4.2.1]{DemboZeitouni:1998},
because $x \to x^k$ is not bounded in this case.
\end{remark}

\begin{remark}
Theorem \ref{cor} told us that in the case of Bernoulli random variables $X_{ij}$, the MDP for \eqref{mdpremark} holds for any 
$k \geq 3$. For $k=1$ and $k=2$, the MDP for \eqref{mdpremark} holds for arbitrary i.i.d. entries
$X_{ij}$ satisfying some exponential integrability: 
For $k=1$ we choose 
$\beta_n(1) := a_n$ with $a_n$ any sequence with $\lim_{n \to \infty} \frac{\sqrt{n}}{a_n}=0$ 
and $\lim_{n \to \infty} \frac{n}{a_n} = \infty$. For
$$
\frac{1}{a_n} \sum_{i=1}^n (X_{ii} - \E (X_{ii}))
$$
the MDP holds with rate $x^2/(2 \V(X_{11}))$ and speed $a_n^2/n$, see Theorem 3.7.1 in \cite{DemboZeitouni:1998}.
In the case of Bernoulli random variables, we choose $\beta_n(1) = a_n$ with $(a_n)_n$ any sequence with
$$
\lim_{n \to \infty} \frac{\sqrt{n p (1-p)}}{a_n} =0 \,\, \text{and} \,\, \lim_{n \to \infty} \frac{ n \sqrt{p(1-p)}}{a_n} = \infty
$$
and $p=p(n)$. Now $(\frac{1}{a_n} \sum_{i=1}^n (X_{ii} - \E(X_{ii})))_n$ satisfies the MDP with rate function $x^2/2$
and speed
$$
\frac{a_n^2}{n p(n) (1- p(n))}.
$$
Hence, in this case $p(n)$ has to fulfill the condition $n^2 p(n) (1-p(n)) \to \infty$.

For $k=2$, we choose $\beta_n(2) = a_n$ with $a_n$ being any sequence with $\lim_{n \to \infty} \frac{n}{a_n} =0$ and 
$\lim_{n \to \infty} \frac{n^2}{a_n} = \infty$. Applying Chebychev's inequality and exponential equivalence arguments
similar as in Remark \ref{remark1}, we obtain the MDP for
$$
\frac{1}{a_n} \sum_{i,j=1}^n (X_{ij}^2 - \E(X_{ij}^2))
$$
with rate $x^2/(2 \V(X_{11}))$ and speed $a_n^2/n^2$.The case of Bernoulli random variables
can be obtained in a similar way.
\end{remark}

\begin{remark}
For $k \geq 3$ we obtain the MDP with $\beta_n = \beta_n(k)$
such that
$$
n^{k-1} \, p(n)^{k-1} \, \sqrt{p(n)(1-p(n))} \ll \beta_n \ll n^k \bigl( p(n)^{k-1} \sqrt{p(n)(1-p(n))} \bigr)^4.
$$ 
Considering a fixed $p$, the range of $\beta_n$ is what we should expect: $n^{k-1} \ll \beta_n \ll n^k$.
But we also obtain the MDP for functions $p(n)$.
In random matrix theory, Wigner 1959 analysed Bernoulli random matrices in Nuclear Physics. 
Interestingly enough, a moderate deviation principle for the empirical mean of the eigenvalues
of a random matrix is known only for symmetric matrices with Gaussian entries and for
non-centered Gaussian entries, respectively, see \cite{Dembo/Guionnet/Zeitouni:2003}. The proofs
depend on the existence of an explicit formula for the joint distribution of the eigenvalues or on 
corresponding matrix-valued stochastic processes.
\end{remark}

\subsection{Symmetric Statistics}
On the way of proving Theorem \ref{theorem1}, we will apply a nice result of Catoni \cite[Theorem 1.1]{Catoni:2003}. Doing
so, we recognized, that Catoni's approach lead us to a general approach proving a moderate deviation principle
for a rich class of statistics, which -without loss of generality- can be assumed to be {\it symmetric statistics}. 
Let us make this more precise. In \cite{Catoni:2003}, non-asymptotic bounds of the $log$-Laplace transform of
a function $f$ of $k(n)$ random variables $X := (X_1, \ldots, X_{k(n)})$ lead to concentration inequalities.
These inequalities can be obtained for independent random variables or for Markov chains. It is assumed
in \cite{Catoni:2003} that the {\it partial finite differences of order one and two} of $f$ are suitably bounded.
The line of proof is a combination of a martingale difference approach and a Gibbs measure philosophy.

\medskip

Let $(\Omega, {\mathcal A})$ be the product of measurable spaces $\otimes_{i=1}^{k(n)} ({\mathcal X}_i, {\mathcal B}_i)$ 
and $\P = \otimes_{i=1}^{k(n)} \mu_i$ be a product probability measure on $(\Omega, {\mathcal A})$. Let
$X_1,\dots,X_{k(n)}$ take its values in $(\Omega, {\mathcal A})$ and assume that $(X_1, \ldots, X_{k(n)})$ 
is the canonical process. Let $(Y_1,\dots,Y_{k(n)})$ be an
independent copy of $X:=(X_1,\dots,X_{k(n)})$ such that $Y_i$ is distributed according to $\mu_i$, $i=1, \ldots, k(n)$.
The function $f: \Omega \to \mathbb R$ is assumed to be {\it bounded and measurable}.

Let $\Delta_i f(x_1^{k(n)};y_i)$ denote the {\it partial
difference of order one} of $f$ defined by
\begin{align*}
\Delta_i f(x_1^{k(n)};y_i)
: = &\ f(x_1,\dots,x_{k(n)})-f(x_1,\dots,x_{i-1},y_i,x_{i+1},\dots,x_{k(n)})\:,
\end{align*}
where
$x_1^{k(n)}:=(x_1,\dots,x_{k(n)}) \in \Omega$ and
$y_i\in{\mathcal X}_i$. Analogously we define for $j<i$
and $y_j\in{\mathcal X}_j$ the {\it partial difference of order two}
\begin{align*}
\Delta_i \Delta_j f(x_1^{k(n)};y_j, y_i)
:=&\Delta_i f(x_1^{k(n)};y_i)-f(x_1,\dots,x_{j-1},y_j,x_{j+1},\dots,x_{k(n)})
\\
&{}+ f(x_1,\dots,x_{j-1},y_j,x_{j+1},\dots,x_{i-1},y_i,x_{i+1},\dots,x_{k(n)})
\:.
\end{align*}

Now we can state our main theorem. If the random variables are independent 
%or from a Markov chain 
and if the partial finite differences of the first and second order of $f$ are 
suitably bounded, then $f$, properly rescaled, satisfies
the MDP:

\begin{theorem}\label{metathm}
In the above setting assume that the random variables in $X$ are independent.
Define $d(n)$ by
\begin{equation} \label{defdn}
d(n):= \sum_{i=1}^{k(n)} |\Delta_i f(X_1^{k(n)};Y_i)|^2 \left(
\frac{1}{3} |\Delta_i f(X_1^{k(n)};Y_i)|
+ \frac{1}{4}\sum_{j=1}^{i-1} |\Delta_i \Delta_j f(X_1^{k(n)};Y_j,Y_i)|
\right).
\end{equation}
Moreover let there exist two sequences $(s_n)_n$ and $(t_n)_n$ such that
\begin{enumerate}
\item \label{vier}
$\displaystyle \frac{s_n^2}{t_n^3} d(n)
\stackrel{n\to\infty}{\longrightarrow} 0$
for all $\omega\in\Omega$ and
\item \label{drei}
$\displaystyle
\frac{s_n}{t_n^2} \V f(X) \stackrel{n\to\infty}{\longrightarrow} C>0$
for the variance of $f$.
\end{enumerate}
Then the sequence of random variables
$$
\left(\frac{f(X)-\E\bigl[f(X)\bigr]}{t_n}\right)_n
$$
satisfies a moderate deviation principle with speed $s_n$ and
rate function $\frac{x^2}{2 C}$.
\end{theorem}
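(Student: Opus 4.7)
The plan is to apply the G\"artner--Ellis theorem to the rescaled sequence $Z_n := (f(X) - \E[f(X)])/t_n$. Specifically, I want to compute the limiting logarithmic moment generating function
$$
\Lambda(\lambda) := \lim_{n \to \infty} \frac{1}{s_n} \log \E\bigl[\exp(\lambda s_n Z_n)\bigr]
$$
and show that $\Lambda(\lambda) = C\lambda^2/2$ for every $\lambda \in \R$. Since this limit is finite, convex and differentiable on all of $\R$, G\"artner--Ellis will then deliver the MDP at speed $s_n$ with rate function $\Lambda^*(x) = x^2/(2C)$, which is precisely the claim.

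The essential non-asymptotic input is Catoni's bound on log-Laplace transforms \cite[Theorem 1.1]{Catoni:2003}. The quantity $d(n)$ in \eqref{defdn}---a cubic expression in the first- and second-order partial differences---is exactly what controls Catoni's third-order correction. Replacing $f$ by $\mu f$ multiplies every $\Delta_i f$ and every $\Delta_i \Delta_j f$ by $\mu$, so the corresponding $d(n)$ picks up a factor $|\mu|^3$, and Catoni's estimate takes the schematic form
$$
\Bigl| \log \E\bigl[\exp\bigl(\mu(f(X) - \E f(X))\bigr)\bigr] - \tfrac{1}{2} \mu^2 \V f(X) \Bigr| \leq |\mu|^3 \, \ess\, d(n),
$$
valid for $\mu$ in a range determined by the uniform bound on the $|\Delta_i f|$. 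Substituting $\mu = \lambda s_n / t_n$ and dividing by $s_n$ yields
$$
\frac{1}{s_n} \log \E\bigl[\exp(\lambda s_n Z_n)\bigr] = \frac{\lambda^2}{2} \cdot \frac{s_n \V f(X)}{t_n^2} + O\!\left( |\lambda|^3 \cdot \frac{s_n^2}{t_n^3} \, \ess\, d(n) \right).
$$
By assumption (\ref{drei}) the leading term converges to $C\lambda^2/2$, while by assumption (\ref{vier}) the error term tends to $0$. Combined with the matching lower bound (which follows from Catoni's two-sided estimate), this gives $\Lambda(\lambda) = C\lambda^2/2$ pointwise in $\lambda$.

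The main obstacle I anticipate is calibrating Catoni's estimate at the scale $\mu = \lambda s_n/t_n$ for every fixed $\lambda \in \R$: his inequality is typically valid only for $\mu$ inside a neighbourhood of $0$ whose size is controlled by the uniform bound on $|\Delta_i f|$. Since $f$ is bounded the partial differences are uniformly bounded, and condition (\ref{vier}) together with the cubic form of $d(n)$ forces $s_n/t_n$ to grow slowly enough that $\mu = \lambda s_n / t_n$ eventually lies in the admissible range for any fixed $\lambda$. Establishing this calibration cleanly, and verifying that $\ess\, d(n)$ is indeed a finite deterministic majorant under the boundedness of $f$, is the delicate point; once this is in place, the G\"artner--Ellis machinery closes the proof.
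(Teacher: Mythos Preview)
Your proposal is essentially the same as the paper's proof: compute the limiting log-moment generating function via Catoni's inequality with $s=\lambda s_n/t_n$, use condition~(\ref{vier}) to kill the cubic remainder and condition~(\ref{drei}) to identify the quadratic limit $C\lambda^2/2$, then invoke G\"artner--Ellis and take the Legendre transform.

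One clarification: the ``main obstacle'' you anticipate does not actually arise. The version of Catoni's inequality used here (Theorem~\ref{thmcatoni}) is stated for \emph{all} $s\in\R_+$, not merely for $s$ in some neighbourhood of the origin, so no calibration of the admissible range is needed. Also, rather than treating $d(n)$ via an essential supremum, the paper simply works with the pointwise hypothesis that $\frac{s_n^2}{t_n^3}d(n)\to 0$ for all $\omega$, which suffices since the left-hand side of Catoni's inequality is deterministic. Finally, negative $\lambda$ is handled in the paper by applying the same estimate to $-f$; your appeal to the two-sidedness of the bound amounts to the same thing.
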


In Section \ref{Catoni} we are going to prove Theorem \ref{metathm} via the G\"artner-Ellis 
theorem. In \cite{Catoni:2003} an inequality has been proved which allows to
relate the logarithm of a Laplace transform with the expectation and
the variance of the observed random variable.
Catoni proves a similar result for the logarithm of a Laplace transform
of random variables with Markovian dependence. One can find a different
$d(n)$ in \cite[Theorem 3.1]{Catoni:2003}. To simplify notations we did
not generalize Theorem \ref{metathm}, but the proof can be adopted
immediately. 
In Section \ref{Examples} we obtain moderate deviations
for several symmetric statistics, including the sample mean and
$U$-statistics with independent and Markovian entries. In Section
\ref{sectionsubgraphs} we proof Theorem \ref{theorem1} and \ref{concentration}.

%%%%%%%%%%%%%%%%%%%%%%%%%%%%%%%%%%%%%%%%%%%%%%%%%%%%%%%%%%%%%%%%%%%%%%%%
\section{Moderate Deviations via Laplace Transforms} \label{Catoni}
Theorem \ref{metathm} is an application of the following theorem:

\begin{theorem}(Catoni, 2003)\label{thmcatoni}\\
In the setting of Theorem \ref{metathm}, assuming that the random variables in $X$ are independent, one obtains
for all $s\in{\mathbb R_+}$,
\begin{align}\label{catineq}
&\Bigl| \log{\E\exp{\bigl(s f(X)\bigr)}}-s\E\bigl[f(X)\bigr]
-\frac{s^2}{2}\V f(X)\Bigr| \leq s^3 d(n)\\
=&
\sum_{i=1}^{k(n)} \frac{s^3}{3} |\Delta_i f(X_1^{k(n)};Y_i)|^3
+ \sum_{i=1}^{k(n)} \sum_{j=1}^{i-1} \frac{s^3}{4} |\Delta_i f(X_1^{k(n)};Y_i)|^2 |\Delta_i \Delta_j f(X_1^{k(n)};Y_j,Y_i)|\:. \nonumber
\end{align}
\end{theorem}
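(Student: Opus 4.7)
The plan is to decompose $f(X)-\E f(X)$ into martingale differences and then expand each conditional log-moment generating function to third order. First I would introduce the filtration $\mathcal F_i:=\sigma(X_1,\dots,X_{k(n)})$, $\mathcal F_i=\sigma(X_1,\dots,X_i)$, and the conditional means $g_i(x_1,\dots,x_i):=\E[f(x_1,\dots,x_i,X_{i+1},\dots,X_{k(n)})]$, so that $f(X)-\E f(X)=\sum_{i=1}^{k(n)}M_i$ with $M_i:=g_i(X_1,\dots,X_i)-g_{i-1}(X_1,\dots,X_{i-1})$. Using the independent copy $Y_i\sim\mu_i$, I would represent $M_i$ as a conditional expectation of the first-order partial difference $\Delta_i f(X_1^{k(n)};Y_i)$ (averaged over the tail coordinates $X_{i+1},\dots,X_{k(n)}$ and over $Y_i$), so that conditional Jensen gives $\E[|M_i|^p\mid\mathcal F_{i-1}]\leq \E[|\Delta_i f(X_1^{k(n)};Y_i)|^p\mid\mathcal F_{i-1}]$ for every $p\ge 1$.

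Next, by the tower property I would unwind $\log\E\exp(s(f(X)-\E f(X)))$ inductively along the filtration. Since $\E[M_i\mid\mathcal F_{i-1}]=0$ and $f$ is bounded, a Taylor expansion of the conditional moment generating function gives
$$ \E[e^{sM_i}\mid\mathcal F_{i-1}]=1+\tfrac{s^2}{2}\E[M_i^2\mid\mathcal F_{i-1}]+r_i,\qquad |r_i|\leq \tfrac{s^3}{6}\E[|M_i|^3\mid\mathcal F_{i-1}]\,e^{s\|f\|_\infty}. $$
Applying $\log(1+t)=t-\tfrac{t^2}{2}+O(t^3)$ to $\log \E[e^{sM_i}\mid\mathcal F_{i-1}]$, summing the resulting increments over $i$, and using the orthogonality $\sum_i\E M_i^2=\V f(X)$, the leading $\tfrac{s^2}{2}\V f(X)$ contribution emerges together with an $O(s^3)$ remainder.

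Finally I would match this remainder with $s^3 d(n)$. The pure cubic piece $\sum_i\tfrac{1}{3}|\Delta_i f|^3$ comes directly from $\E|M_i|^3\leq\E|\Delta_i f|^3$ by conditional Jensen, together with the explicit Taylor constant. The mixed term $\sum_i\sum_{j<i}\tfrac{1}{4}|\Delta_i f|^2|\Delta_i\Delta_j f|$ is the real technical heart: it arises from replacing the pathwise conditional variances $\E[M_i^2\mid\mathcal F_{i-1}]$, which are what the recursion actually produces, by their expectations $\E M_i^2$. This replacement would be carried out by a further Efron--Stein-style telescoping in the coordinates $X_j$ for $j<i$; swapping $X_j$ for $Y_j$ inside $M_i$ introduces a first difference which, because $M_i$ is itself built from the first-order difference $\Delta_i f$, produces precisely the second-order partial difference $\Delta_i\Delta_j f$. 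The two-sided absolute-value form $|\cdot|\leq s^3 d(n)$ then follows from using both $\log(1+t)\leq t$ and $\log(1+t)\geq t-\tfrac{t^2}{2}$ in the appropriate places. The hard part will be bookkeeping this combinatorics carefully enough to recover precisely the constants $\tfrac{1}{3}$ and $\tfrac{1}{4}$ while keeping the full two-sided bound; a cruder use of the triangle inequality would yield the correct qualitative form but would miss these sharp constants.
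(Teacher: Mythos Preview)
Your high-level plan --- martingale decomposition $f(X)-\E f(X)=\sum_i M_i$ along the coordinate filtration, then a third-order expansion of the log-Laplace transform --- is indeed the skeleton of the paper's (i.e.\ Catoni's) proof. Two points in your sketch, however, are genuine gaps rather than missing bookkeeping.

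\smallskip
\textbf{The ``unwinding'' is not a sum of conditional log-MGFs.} The increment
\[
\log\E\bigl[e^{s\sum_{j\leq i}M_j}\bigr]-\log\E\bigl[e^{s\sum_{j<i}M_j}\bigr]
\;=\;\log\E_{\,s\sum_{j<i}M_j}\bigl[e^{sM_i}\bigr]
\]
is a log-expectation under the \emph{exponentially tilted (Gibbs) measure} $dP_W\propto e^W\,dP$ with $W=s\sum_{j<i}M_j$, not $\log\E[e^{sM_i}\mid\mathcal F_{i-1}]$ under $P$. The paper works throughout with these Gibbs expectations $\E_W[\cdot]$; your proposal never introduces them, and the step ``apply $\log(1+t)\approx t$ to $\log\E[e^{sM_i}\mid\mathcal F_{i-1}]$ and sum over $i$'' is not the correct recursion.

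\smallskip
\textbf{The Lagrange remainder spoils the bound.} Your Taylor estimate for $r_i$ carries a factor $e^{s\|f\|_\infty}$ (at best $e^{s\|\Delta_i f\|_\infty}$). The theorem claims the clean bound $s^3 d(n)$ for \emph{all} $s>0$, with no exponential factor. The paper obtains this by using the integral form of the remainder for the cumulant generating function: writing $\phi(\alpha)=\log\E_W e^{\alpha M_i}$, one has $\phi'''(\alpha)=\M^3_{W+\alpha M_i}(M_i)$, the third centered moment under a further tilt. This third cumulant is bounded \emph{uniformly in $\alpha$} by $\|M_i\|_\infty^3\leq |\Delta_i f|^3$, so
\[
\Bigl|\phi(s)-\tfrac{s^2}{2}\phi''(0)\Bigr|
=\Bigl|\int_0^s\tfrac{(s-\alpha)^2}{2}\,\phi'''(\alpha)\,d\alpha\Bigr|
\leq \tfrac{s^3}{3}|\Delta_i f|^3,
\]
which is exactly the first piece of $d(n)$ with the constant $1/3$. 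Your Lagrange-remainder route cannot reach this.

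\smallskip
For the mixed term your ``Efron--Stein telescoping in $X_j$, $j<i$'' is the right intuition, but in the paper this is again carried out under the Gibbs tilt: one compares $\E_{s\sum_{j<i}M_j}[M_i^2]$ with $\E[M_i^2]$ by peeling off one $F_j$ at a time and applying Cauchy--Schwarz under the tilted measure, which after integrating over $\alpha\in[0,s]$ yields precisely the $\tfrac{s^3}{4}|\Delta_i f|^2|\Delta_i\Delta_j f|$ contribution. Without the Gibbs framework you would at best get the right structure with unspecified constants and an extraneous $e^{Cs}$ factor.
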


\begin{proofsect}{Proof of Theorem~\ref{thmcatoni}} 
We decompose $f(X)$ into martingale differences
\begin{equation*}
F_i(f(X))
=\E\bigl[f(X)\big|X_1,\dots,X_i\bigr]-\E\bigl[f(X)\big|X_1,\dots,X_{i-1}\bigr]
\quad\text{, for all }i\in\left\{1,\dots,{k(n)}\right\}
\:.
\end{equation*}
The variance can be represented by
$\V f(X)=\displaystyle \sum_{i=1}^{k(n)} \E\Bigl[\bigl(F_i(f(X))\bigr)^2\Bigr]$.

Catoni uses the triangle inequality and compares the two terms 
$\log{\E e^{s f(X)-s \E[f(X)]}}$ and $\frac{s^2}{2} \V f(X)$
to the above representation of the variance with respect to the Gibbs measure with
density
$$
dP_W := \frac{e^W}{\E[e^W]} \, dP,
$$
where $W$ is a bounded measurable function of $(X_1, \ldots, X_{k(n)})$. 
We denote an expectation due to this Gibbs measure by $\E_W$, e.g.
$$
\E_W[X] :=\frac{\E[X \exp{(W)}]}{\E[\exp{(W)}]}\:.
$$

On the one hand Catoni bounds the difference
$$\Bigl| \log{\E e^{s f(X)-s \E[f(X)]}}
	-\frac{s^2}{2} \sum_{i=1}^{k(n)}
	\E_{s \E\bigl[f(X)-\E[f(X)]\big|X_1,\dots,X_{i-1}\bigr]}
	\bigl[\bigl(F_i\bigl(f(X)\bigr)\bigr)^2\bigr]\Bigr|
$$
via partial integration:
\begin{eqnarray*}
&& \Bigl| \log \E e^{s \bigl(f(X)-\E[f(X)]\bigr)}-
\frac{s^2}{2}\sum_{i=1}^{k(n)} \E_{s \E\bigl[f(X)-\E[f(X)]\big|X_1,\dots,X_{i-1}\bigr]}\bigl[F_i^2(f(X))\bigr]\Bigr|\\
&=&
\Bigl|\sum_{i=1}^{k(n)} \int_0^{s} \frac{(s-\alpha)^2}{2} \M^3_{s \E\bigl[f(X)-\E[f(X)]\big|X_1,\dots,X_{i-1}\bigr]+\alpha F_i(f(X))}[F_i(f(X))] d\alpha\Bigr|
\:,
\end{eqnarray*}
where $\M^3_U[X] := \E_U \bigl[\bigl(X-\E_U[X]\bigr)^3\bigr]$ for a bounded measurable function $U$ of $(X_1, \ldots, X_{k(n)})$.
Moreover
\begin{eqnarray*}
&&
\Bigl|\sum_{i=1}^{k(n)} \int_0^{s} \frac{(s-\alpha)^2}{2} \M^3_{s \E\bigl[f(X)-\E[f(X)]\big|X_1,\dots,X_{i-1}\bigr]+\alpha F_i(f(X))}[F_i(f(X))] d\alpha\Bigr|
\\
&\leq& \Bigl|\sum_{i=1}^{k(n)} ||F_i(f(X))||_{\infty}^3 \int_0^{s} (s-\alpha)^2 d\alpha\Bigr|
\leq
\sum_{i=1}^{k(n)} \frac{s^3}{3} |\Delta_i f(X_1^{k(n)};Y_i)|^3
\:.
\end{eqnarray*}
On the other hand he uses the following calculation:
\begin{eqnarray*}
&&\Bigl|\frac{s^2}{2} \sum_{i=1}^{k(n)}
	\E_{s \E\bigl[f(X)-\E[f(X)]\big|X_1,\dots,X_{i-1}\bigr]}
		\bigl[\bigl(F_i(f(X))\bigr)^2\bigr]
- \frac{s^2}{2} \V f(X) \Bigr|
\\
&=& \Bigl|\frac{s^2}{2} \sum_{i=1}^{k(n)}
		\E_{s \E\bigl[f(X)-\E[f(X)]\big|X_1,\dots,X_{i-1}\bigr]}
		\bigl[\bigl(F_i(f(X))\bigr)^2\bigr]
- \frac{s^2}{2} \sum_{i=1}^{k(n)} \E\bigl[\bigl(F_i(f(X))\bigr)^2\bigr]\Bigr|
\\
&=& \frac{s^2}{2} \sum_{i=1}^{k(n)} \sum_{j=1}^{i-1}
		\E_{s \E\bigl[f(X)-\E[f(X)]\big|X_1,\dots,X_{i-1}\bigr]}
		\left[F_j\left(\bigl(F_i(f(X))\bigr)^2\right)\right]
\\
&\leq& \frac{s^2}{2}\sum_{i=1}^{k(n)} \sum_{j=1}^{i-1} \int_0^{s} \sqrt{
\E_{\alpha G_{j,i-1}}^{\mathcal G_j}[F_j\bigl(F_i^2(f(X))\bigr)^2]
\ \E_{\alpha G_{j,i-1}}^{\mathcal G_j}[W^2]} {d\alpha}
\end{eqnarray*}
applying the Cauchy-Schwartz inequality and the notation
\begin{align}
&E^{\mathcal G_j}[\cdot]:=
\E[\cdot| X_1,\dots,X_{j-1}, X_{j+1}, \dots,X_{k(n)}] \quad\text{and}\quad \nonumber
\\
&
W=G_{j,i-1}-\E_{\alpha G_{j,i-1}}^{\mathcal G_j} [G_{j,i-1}], \nonumber
\\
&\text{where }G_{j,i-1}= \E\bigl[f(X)\big| X_1,\dots,X_{i-1}\bigr]
-\E^{\mathcal G_j}\Bigl[\E\bigl[f(X)\big| X_1,\dots,X_{i-1}\bigr]\Bigr] \nonumber
\:.
\end{align}
As you can see in \cite{Catoni:2003} $F_j\bigl(F_i^2(f(X))\bigr)^2$
and $W$ can be estimated in terms of $\Delta_i f(X)$
and $\Delta_i \Delta_j f(X)$, independently of the variable of
integration $\alpha$. This leads to the inequality stated
in Theorem \ref{thmcatoni}.
\qed
\end{proofsect}
\medskip

\begin{proofsect}{Proof of Theorem~\ref{metathm}} 
To use the G\"artner-Ellis theorem (see \cite[Theorem2.3.6]{DemboZeitouni:1998})
we have to calculate the limit of
\begin{equation}
\frac{1}{s_n} \log \E
\exp \left( \lambda s_n \frac{f(X)-\E [f(X)]}{t_n} \right)
= \frac{1}{s_n} \left(
\log \E \exp \left( \frac{\lambda s_n}{t_n} f(X) \right) - \frac{\lambda s_n}{t_n} \E[f(X)]
\right)
\end{equation}
for $\lambda\in\mathbb R$.
We apply Theorem \ref{thmcatoni} for
$\displaystyle s=\frac{\lambda s_n}{t_n}$ and $\lambda>0$.
The right hand side of the inequality \eqref{catineq}
converges to zero for large $n$:
\begin{equation}
\frac{1}{s_n} s^3 d(n) = \lambda^3 \frac{s_n^2}{t_n^3} d(n)
 \stackrel{n\to\infty}{\longrightarrow} 0
\end{equation}
as assumed in condition \eqref{vier}.
Applying \eqref{catineq} this leads to the limit
\begin{equation}\label{Lambda}
\Lambda(\lambda):=
\lim_{n\to\infty}\frac{1}{s_n} \log \E
\exp \left(\lambda s_n \frac{f(X)-\E [f(X)]}{t_n} \right)
=
\lim_{n\to\infty} \frac{1}{s_n} \frac{\lambda^2 s_n^2}{2 t_n^2} \V f(X)
= \frac{\lambda^2}{2} C,
\end{equation}
where the last equality follows from condition (2).
$\Lambda$ is finite and differentiable. The same calculation is true for $-f$
and consequently \eqref{Lambda} holds for all $\lambda\in\mathbb R$. Hence
we are able to apply the G\"artner-Ellis theorem.
This proves a moderate deviation principle of
$\left(\frac{f(X)-\E [f(X)]}{t_n}\right)_n$ with speed
$s_n$ and rate function
\begin{equation*}
I(x)=
\sup_{\lambda\in\mathbb R} \left\{ \lambda x - \frac{\lambda^2}{2} C \right\}
= \frac{x^2}{2 C}\:.
\end{equation*}
\qed
\end{proofsect}

%%%%%%%%%%%%%%%%%%%%%%%%%%%%%%%%%%%%%%%%%%%%%%%%%%%%%%%%%%%%%%%%%%%%
\section{Moderate Deviations for Non-degenerate U-statistics} \label{Examples}

%%%%%%%%%%%%%%%%%%%%%%%%%%%%%%%%%%%%%%%%%%%%%%%%%%%%%%%%%%%%%%%%%%%%

In this section we show three applications of Theorem \ref{metathm}.
We start with the simplest case:

\subsection{sample mean}
Let $X_1,\dots,X_n$ be independent and identically distributed
random variables with values in a compact set $[-r,r], r>0$ fix, and
positive variance as well as $Y_1,\dots,Y_n$ independent copies. 
To apply Theorem \ref{metathm} for $f(X)=\frac{1}{\sqrt{n}} \sum_{m=1}^n X_m$
the partial differences of $f$ have to tend to zero fast enough for n to
infinity:
\begin{align}
&|\Delta_i f(X_1^n; Y_i)|
= \frac{1}{\sqrt{n}} |X_i-Y_i| \leq \frac{2r}{\sqrt{n}}
\\
&\Delta_i \Delta_j f(X_1^n; Y_j,Y_i)=0
\end{align}
Let $a_n$ be a sequence with $\lim_{n\to\infty}\frac{\sqrt{n}}{a_n} = 0$
and $\lim_{n\to\infty} \frac{n}{a_n} =\infty$. For $t_n= \frac{a_n}{\sqrt{n}}$
and $s_n=\frac{a_n^2}{n}$ the conditions of Theorem \ref{metathm} are
satisfied:
\begin{enumerate}
\item
$\displaystyle \frac{s_n^2}{t_n^3} d(n)
\leq \frac{a_n}{\sqrt{n}} \frac{4 r^2}{n}  \sum_{m=1}^n \frac{2r}{3\sqrt{n}}
=   \frac{a_n}{n} \frac{8 r^3}{3}.$
Because $d(n)$ is positive this implies
$\displaystyle
\lim_{n \to \infty} \frac{s_n^2}{t_n^3} d(n)=0$.
\item
$\displaystyle \frac{s_n}{t_n^2} \V f(X) = \V\left(\frac{1}{\sqrt{n}}\sum_{m=1}^n X_m\right) = \V(X_1)$.
\end{enumerate}
The application of Theorem \ref{metathm} proves the MDP for
$\displaystyle
\frac{1}{a_n} \left(\sum_{m=1}^n X_m - n\E X_1\right)_n$
with speed $s_n$ and rate function $I(x)=\frac{x^2}{2 \V X_1}$.
This result is well known, see for example \cite{DemboZeitouni:1998}, Theorem 3.7.1,
and references therein. The MDP can be proved under {\it local exponential moment}
conditions on $X_1$: $\E (\exp (\lambda X_1)) < \infty$ for a $\lambda >0$. In \cite{Catoni:2003},
the bounds of the $\log$-Laplace transformation are obtained under exponential moment
conditions. Applying this result, we would be able to obtain the MDP under exponential moment
conditions, but this is not the focus of this paper. 

\subsection{non-degenerate U-statistics with independent entries}
Let $X_1,\dots,X_n$ be independent and identical distributed random
variables with values in a measurable space $\mathcal X$. For a
measurable and symmetric function $h:{\mathcal X}^m\to \R$ we define
$$
U_n(h):= \frac{1}{\left(n\atop m\right)}
\sum_{1\leq i_1<\dots<i_m \leq n} h(X_{i_1},\dots,X_{i_m})\:,
$$
where symmetric means invariant under all permutation of its arguments.
$U_n(h)$ is called a {\it U-statistic} with {\it kernel} $h$ and
{\it degree} $m$.

Define the conditional expectation for $c=1,\dots,m$ by
\begin{eqnarray*}
h_c(x_1,\dots,x_c)
&:=&\E\bigl[ h(x_1,\dots,x_c,X_{c+1},\dots, X_m)\bigr]
\\
&=&\E\bigl[ h(X_1,\dots, X_m)\big| X_1=x_1,\dots, X_c=x_c\bigr]
\end{eqnarray*}
and the variances by $\sigma_c^2:=\V\bigl[h_c(X_1,\dots,X_c)\bigr]$.
A U-statistic is called {\it degenerate of order $d$} if and only if $0=\sigma_1^2 = \cdots = \sigma_d^2 < \sigma_{d+1}^2$ and
and {\it non-degenerate} if $\sigma_1^2>0$.

By the Hoeffding-decomposition (see for example \cite{Lee:1990}), we know
that for every symmetric function $h$, the $U$-statistic can be decomposed into a sum
of degenerate $U$-statistics of different orders. 
In the degenerate case the linear term of this decomposition
disappears. Eichelsbacher and Schmock showed the MDP for non-degenerate $U$-statistics
in \cite{EichelsbacherSchmock:2003}; the proof used the fact that the linear term 
in the Hoeffding-decomposition is leading in the non-degenerate case. 
In this article the observed U-statistic is
assumed to be of the latter case.

We show the MDP for appropriate scaled U-statistics without applying Hoeffding's decomposition.
The scaled U-statistic $f:=\sqrt{n} U_n(h)$ with bounded kernel $h$ and
degree $2$ fulfils the inequality:
\begin{eqnarray*}
\lefteqn{\Delta_k f(x_1^n; y_k)
= \frac{2 \sqrt{n}}{n(n-1)} \Big( \sum_{1\leq i<j\leq n} h(x_i,x_j)
- \sum_{{1\leq i<j\leq n}\atop{i,j\not=k}} h(x_i,x_j)
- \sum_{i=1}^{k-1} h(x_i,y_k)}
\\
&&{}-\sum_{j=k+1}^n h(y_k,x_j)
\Big)
\\
&=&\frac{2}{\sqrt{n}(n-1)}
\left(
\sum_{i=1}^{k-1} h(x_i,x_k) +\sum_{j=k+1}^n h(x_k,x_j)
-\sum_{i=1}^{k-1} h(x_i,y_k) -\sum_{j=k+1}^n h(y_k,x_j)
\right)\\
&\leq&
\frac{4 ||h||_{\infty}}{\sqrt{n}}
\end{eqnarray*}
for $k=1,\dots,n$.  Analogously one
can write down all summations of the kernel $h$ for $\Delta_m \Delta_k
f(x_1^n; y_k,y_m)$. Most terms add up to zero and we get:
\begin{eqnarray*}
\Delta_m \Delta_k f(x_1^n; y_k,y_m)
&=& \frac{2 \left(h(x_k,x_m)-h(y_k,x_m)-h(x_k,y_m)+h(y_k,y_m) \right)}{\sqrt{n}(n-1)}
\\
&\leq& \frac{2}{\sqrt{n}(n-1)} 4 ||h||_{\infty}
\leq \frac{16 ||h||_{\infty}}{n^{3/2}}\:.
\end{eqnarray*}

Let $a_n$ be a sequence with $\lim_{n\to\infty}\frac{\sqrt{n}}{a_n} = 0$
and $\lim_{n\to\infty} \frac{n}{a_n} =\infty$.
The aim is the MDP for a real random variable of the kind $\frac{n}{a_n}
U_n(h)$ and the speed $s_n:= \frac{a_n^2}{n}$.  
To apply Theorem \ref{metathm} for $f(X)=\sqrt{n} U_n(h)(X)$, $s_n$
as above and $t_n:= \frac{a_n}{\sqrt{n}}$, we obtain
\begin{enumerate}
\item
$\displaystyle \frac{s_n^2}{t_n^3} d(n)
\leq \frac{a_n}{\sqrt{n}} \left(
\frac{4 ||h||_{\infty}^3}{3 \sqrt{n}} + \frac{n-1}{n^{3/2}} 8 ||h||_{\infty}^3
\right)$. The right hand side converges to 0, because $\lim_{n \to \infty} a_n/n =0$.
\item
$\displaystyle \frac{s_n}{t_n^2} \V f(X)
=\frac{a_n^2}{n} \frac{n}{a_n^2} \V\bigl(\sqrt{n} U_n(h)(X)\bigr)
\stackrel{n\to\infty}{\longrightarrow} \ 4\sigma_1^2
$,  see Theorem 3 in \cite[chapter 1.3]{Lee:1990}.
\end{enumerate}
The non-degeneracy of $U_n(h)$ implies that $4\sigma_1^2>0$.

The application of Theorem \ref{metathm} proves:

\begin{theorem}\label{thmustat}
Let $(a_n)_n\in (0,\infty)^{\mathbb N}$ be a sequence with
$\lim_{n\to\infty}\frac{\sqrt{n}}{a_n} = 0$ and
$\lim_{n\to\infty} \frac{n}{a_n} =\infty$.
Then the sequence of non-degenerate and centered U-statistics $\bigl( \frac{n}{a_n} U_n(h) \bigr)_n$ 
with a real-valued, symmetric and bounded kernel function $h$ satisfies the
MDP with speed $s_n:= \frac{a_n^2}{n}$ and good rate function
$$
I(x)
= \sup_{\lambda\in{\mathbb R}}\{\lambda x - 2 \lambda^2 \sigma_1^2\}
= \frac{x^2}{8 \sigma_1^2}\:.
$$
\end{theorem}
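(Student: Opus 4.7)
The plan is to apply Theorem \ref{metathm} to the bounded symmetric function $f(X) := \sqrt{n}\, U_n(h)(X)$, with normalizing sequence $t_n := a_n/\sqrt{n}$ and candidate speed $s_n := a_n^2/n$. With these choices, $(f(X)-\E f(X))/t_n = (n/a_n)\, U_n(h)$ under the centering assumption $\E U_n(h) = 0$, so a successful application of the meta-theorem directly yields the MDP claimed for $(n/a_n)\, U_n(h)$ with the stated speed.

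First I would establish the bounds on the partial finite differences. Because replacing $x_k$ by $y_k$ in the argument of $\sqrt{n}\, U_n(h)$ affects only the $n-1$ kernel evaluations involving index $k$, the boundedness of $h$ yields $|\Delta_k f(X_1^n; Y_k)| \leq 4\|h\|_\infty/\sqrt{n}$. A second substitution $x_m \mapsto y_m$ leaves only the four-term combination $2(h(x_k,x_m) - h(y_k,x_m) - h(x_k,y_m) + h(y_k,y_m))/(\sqrt{n}(n-1))$, producing $|\Delta_m \Delta_k f(X_1^n; Y_k, Y_m)| \leq 16\|h\|_\infty/n^{3/2}$. Plugging these into the definition of $d(n)$ gives $d(n) = O(1/\sqrt{n})$, hence $s_n^2\, d(n)/t_n^3 = (a_n/\sqrt{n})\cdot O(1/\sqrt{n}) = O(a_n/n) \to 0$ by the hypothesis on $(a_n)$, verifying condition \eqref{vier} of Theorem \ref{metathm}.

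For condition \eqref{drei} I would invoke the classical variance asymptotic for non-degenerate U-statistics of degree two, namely $n\,\V U_n(h) \to 4\sigma_1^2$ (see \cite[Chapter 1.3, Theorem 3]{Lee:1990}), which gives $(s_n/t_n^2)\V f(X) = \V(\sqrt{n}\, U_n(h)) \to 4\sigma_1^2 =: C$. The crux of the argument, and the only place in which the hypotheses of the theorem play a substantive role, is the non-degeneracy assumption $\sigma_1^2 > 0$: it alone ensures $C > 0$, so that Theorem \ref{metathm} produces a genuine (non-degenerate) MDP rather than a trivial one. Were the U-statistic degenerate at first order, the leading variance would be of order $n^{-2}$ rather than $n^{-1}$, the normalization $t_n$ would be mis-chosen, and a different Hoeffding projection onto the quadratic term would be required. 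Once both conditions are verified, the G\"artner-Ellis step inside the proof of Theorem \ref{metathm} yields the stated rate function $x^2/(2C) = x^2/(8\sigma_1^2)$, completing the proof.
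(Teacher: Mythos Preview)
Your proposal is correct and follows essentially the same route as the paper: set $f(X)=\sqrt{n}\,U_n(h)$, $t_n=a_n/\sqrt{n}$, $s_n=a_n^2/n$, bound $|\Delta_k f|\le 4\|h\|_\infty/\sqrt{n}$ and $|\Delta_m\Delta_k f|\le 16\|h\|_\infty/n^{3/2}$, deduce $d(n)=O(n^{-1/2})$ so that $(s_n^2/t_n^3)\,d(n)=O(a_n/n)\to 0$, and use $n\,\V U_n(h)\to 4\sigma_1^2>0$ for the variance condition. The paper's argument is the same in every essential detail.
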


\begin{remark}
Theorem \ref{thmustat} holds, if the kernel function $h$ depends on $i$ and
$j$, e.g. the $U$-statistic is of the form
$\frac{1}{\left(n \atop 2\right)}
\sum_{1\leq i<j\leq n} h_{i,j}(X_i,X_j)$. One can see this in the estimation of
$\Delta_i f(X)$ and $\Delta_i \Delta_j f(X)$. This is an improvement of the 
result in \cite{EichelsbacherSchmock:2003}.
\end{remark}
\begin{remark}
We considered U-statistics with degree $2$. For degree $m>2$ we get the
following estimation for the partial differences of
\begin{align}
&f(X):= \frac{1}{\sqrt{n} \left({n}\atop{m}\right)}
\sum_{1\leq i_1<\dots< i_m\leq n} h(X_{i_1},\dots,X_{i_m})\:: \nonumber
\\
&\Delta_i f(X)
\leq \sqrt{n} \frac{1}{\left({n}\atop{m}\right)}
\left({n-1}\atop{m-1}\right) 2 ||h||_{\infty}
=\frac{2 m}{\sqrt{n}}  ||h||_{\infty} \nonumber
\\
&\Delta_i \Delta_j f(X)
\leq \sqrt{n} \frac{1}{\left({n}\atop{m}\right)}
\left({n-2}\atop{m-2}\right) 4 ||h||_{\infty}
= \frac{4 m (m-1)}{\sqrt{n} (n-1)}  ||h||_{\infty} \nonumber
\end{align}
and Theorem \ref{metathm} can be applied as before.
\end{remark}

Theorem \ref{thmustat} is proved in \cite{EichelsbacherSchmock:2003} in a more
general context. Eichelsbacher and Schmock showed a moderate deviation
principle for degenerate and non-degenerate U-statistics with a kernel
function $h$, which is bounded or satisfies exponential moment conditions
(see also \cite{Eichelsbacher:1998, Eichelsbacher:2001}).

\medskip

{\it Example 1:} Consider the {\it sample variance} $U_n^{\V}$,
which is a U-statistic of degree $2$ with kernel
$h(x_1,x_2)=\frac{1}{2}(x_1-x_2)^2$. Let the random variables
$X_i, i=1,\dots,n,$ be restricted to take values in a compact interval.
A simple calculation shows
$$
\sigma_1^2
= \V\bigl[ h_1(X_1)\bigr]
=\frac{1}{4}\V\bigl[(X_1-\E X_1)^2 %+\V(X_1)
\bigr]
=\frac{1}{4}
\left( \E[(X_1-\E X_1)^4] - (\V X_1)^2 \right)\:.
$$
The U-statistic is non-degenerate, if the condition $\E[(X_1-\E
X_1)^4] > (\V X_1)^2$ is satisfied.
Then $\left(\frac{n}{a_n(n-1)} \sum_{i=1}^n (X_i-\bar{X})^2\right)_n$
satisfies the MDP with speed $\frac{a_n^2}{n}$ and good rate function
$$
I^{\V}(x)= \frac{x^2}{8\sigma_1^2}
=\frac{x^2}{2 \left(\E[(X_1-\E X_1)^4]-(\V X_1)^2\right)}\:.
$$

In the case of independent Bernoulli random variables with
$P(X_1=1)=1-P(X_1=0)=p$, $0<p<1$, $U_n^{\V}$ is a non-degenerate
U-statistic for $p\not= \frac{1}{2}$ and the corresponding rate
function is given by:
$$
I^{\V}_ {\text{bernoulli}}(x)
=\frac{x^2}{2 p(1-p)\bigl(1-4p(1-p)\bigr)}\:.
$$

{\it Example 2:} The {\it sample second moment} is defined by the
kernel function $h(x_1,x_2)=x_1 x_2$. This leads to
$$\sigma_1^2
=\V\bigl( h_1(X_1)\bigr) = \V\bigl(X_1 \E X_1 \bigr)
=\bigl( \E X_1\bigr)^2 \V X_1\:.
$$
The condition $\sigma_1^2>0$ is satisfied, if the expectation and the
variance of the observed random variables are unequal to zero. The values
of the random variables have to be in a compact interval as in the
example above.
Under this conditions $\frac{n}{a_n} \sum_{1\leq i< j \leq n} X_i X_j$
satisfies the MDP with speed $\frac{a_n^2}{n}$ and good rate function
$$
I^{\text{sec}}(x)
=\frac{x^2}{8\sigma_1^2} =\frac{x^2}{8 \bigl( \E X_1\bigr)^2 \V X_1}\:.
$$

For independent Bernoulli random variables the rate function for all
$0<p<1$ is:
$$
I^{\text{sec}}_{\text{bernoulli}}(x)
=\frac{x^2}{8 p^3 (1-p)}\:.
$$

{\it Example 3:} {\it Wilcoxon one sample statistic}
Let $X_1,\dots,X_n$ be real valued, independent and identically
distributed random variables with absolute continuous distribution
function symmetric in zero.
We prove the MDP for -properly rescaled- 
\begin{equation*}
W_n= \sum_{1\leq i<j \leq n} 1_{\{X_i+X_j>0\}} 
=\left(n\atop2\right) U_n(h)
\end{equation*}
defining $h(x_1,x_2):=  1_{\{x_1+x_2>0\}}$ for all $x_1,x_2\in\mathbb R$.
Under these assumptions one can calculate
$\sigma_1^2=\text{Cov}\bigl(h(X_1,X_2),h(X_2,X_3)\bigr) = \frac{1}{12}$.
Applying Theorem \ref{thmustat} as before we proved the MDP
for the Wilcoxon one sample statistic
$\frac{1}{(n-1) a_n} \left(W_n-\frac{1}{2}\left(n\atop2\right)\right)$
with speed $\frac{a_n^2}{n}$ and good rate function
$I^W(x)= \frac{3}{2} x^2$.

\subsection{non-degenerate U-statistics with Markovian entries}\label{Markov}
The moderate deviation principle in Theorem \ref{metathm} is stated for
independent random variables. Catoni showed in \cite{Catoni:2003}, that
the estimation of the logarithm of the Laplace transform can be
generalized for Markov chains via a coupled process.
In the following one can see, that these results yield analogously to the
proof of Theorem \ref{metathm} to a moderate deviation principle.

In this section we use the notation introduced in \cite{Catoni:2003},
Chapter 3.

Let us assume that $(X_k)_{k\in{\mathbb N}}$ is a Markov chain
such that for $X:=(X_1,\dots, X_n)$ the following inequalities hold
\begin{align}
P\bigl(\tau_i > i+k\big| {\mathcal G}_i, X_i\bigr)
&\leq A \rho^k \quad \forall k\in{\mathbb N} \quad a.s.  \label{Cat01} 
\\
P\bigl(\tau_i > i+k\big| {\mathcal F}_n, {\stackrel{i}{Y}}_i\bigr)
&\leq A \rho^k \quad \forall k\in{\mathbb N} \quad a.s.  \label{Cat02}
\end{align}
for some positive constants $A$ and $\rho<1$. 
Here ${\stackrel{i}{Y}} := ({\stackrel{i}{Y}}_1, \ldots, {\stackrel{i}{Y}}_n)$, $i=1, \ldots, n$, 
are $n$ coupled stochastic processes satisfying for any $i$ that 
${\stackrel{i}{Y}}$ is equal in distribution to $X$. For the list of the properties of these coupled processes, 
see page 14 in \cite{Catoni:2003}. Moreover, the $\sigma$-algebra $ {\mathcal G}_i$ in \eqref{Cat01} is generated
by ${\stackrel{i}{Y}}$, the $\sigma$-algebra ${\mathcal F}_n$ in \eqref{Cat02} is generated by $(X_1, \ldots, X_n)$.
Finally the coupling stopping times $\tau_i$ are defined as
$$
\tau_i = \inf \{ k \geq i | {\stackrel{i}{Y}}_k = X_k \, \}.
$$
Now we can state our result:

\begin{theorem}\label{thmmarkov}
Let us assume that $(X_k)_{k\in{\mathbb N}}$ is a Markov chain
such that for $X:=(X_1,\dots, X_n)$ \eqref{Cat01} and \eqref{Cat02} hold true.
Let $U_n(h)(X)$ be a
non-degenerate U-statistic with bounded kernel function $h$ and
$\lim_{n\to\infty} \V\bigl(\sqrt{n}U_n(h)(X)\bigr)<\infty$.
Then for every sequence $a_n$, where
$$\lim_{n\to\infty} \frac{a_n}{n}=0 \text{ and }
\lim_{n\to\infty} \frac{n}{a_n^2}=0\:,$$
the sequence $\bigl( \frac{n}{a_n} U_n(h)(X) \bigr)_n$
satisfies a moderate deviation principle with speed
$s_n=\frac{a_n^2}{n}$ and rate function I given by
$$
I(x):= \sup_{\lambda\in\mathbb R}\left\{ \lambda x - \frac{\lambda^2}{2} \lim_{n\to\infty} \V\bigl(\sqrt{n} U_n(h)(X)\bigr)\right\}\:.
$$
\end{theorem}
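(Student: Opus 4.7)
The plan is to follow the proof strategy of Theorem \ref{thmustat} verbatim, simply replacing Catoni's log-Laplace bound for independent variables (Theorem \ref{thmcatoni}) by its Markovian analogue \cite[Theorem 3.1]{Catoni:2003}, and then invoking the G\"artner-Ellis theorem exactly as in the proof of Theorem \ref{metathm}. Concretely, set $f(X):=\sqrt{n}\,U_n(h)(X)$, $t_n:=a_n/\sqrt{n}$ and $s_n:=a_n^2/n$, so that $(f(X)-\E f(X))/t_n=(n/a_n)\bigl(U_n(h)(X)-\E U_n(h)(X)\bigr)$. At the argument $s=\lambda s_n/t_n=\lambda a_n/\sqrt{n}$, Catoni's Markovian inequality produces
$$\bigl|\log\E\exp\bigl(sf(X)\bigr)-s\,\E f(X)-\tfrac{s^2}{2}\V f(X)\bigr|\leq s^3\,d_M(n),$$
where $d_M(n)$ is the Markov analogue of the quantity $d(n)$ of \eqref{defdn}: its summands have the same shape in $|\Delta_i f|$ and $|\Delta_i\Delta_j f|$, but are additionally weighted by geometric coupling factors $A\rho^k$ coming from the assumptions \eqref{Cat01}--\eqref{Cat02}.

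Next, I would import the partial-difference bounds from the independent U-statistic calculation in Section \ref{Examples}: since $h$ is bounded and the bounds
$$|\Delta_i f(X_1^n;Y_i)|\leq \frac{4\|h\|_\infty}{\sqrt{n}},\qquad |\Delta_i\Delta_j f(X_1^n;Y_j,Y_i)|\leq \frac{16\|h\|_\infty}{n^{3/2}}$$
use only the boundedness of the kernel, they remain valid in the Markovian setting. Substituting these into $d_M(n)$, the coupling weights sum to the convergent geometric series $\sum_{k\geq 0}A\rho^k=A/(1-\rho)$ and thus contribute only a bounded multiplicative constant; counting the $n$ terms of the outer sum exactly as in the independent computation yields $d_M(n)=O(1/\sqrt{n})$ with an implied constant depending only on $\|h\|_\infty$, $A$ and $\rho$.

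With these estimates in hand, the first G\"artner-Ellis condition reads
$$\frac{s_n^2}{t_n^3}\,d_M(n)=\frac{a_n}{\sqrt{n}}\cdot O\!\left(\frac{1}{\sqrt{n}}\right)=O\!\left(\frac{a_n}{n}\right)\longrightarrow 0,$$
by the hypothesis $a_n/n\to 0$. The variance condition $(s_n/t_n^2)\,\V f(X)=\V(\sqrt{n}\,U_n(h)(X))\to C$ is the assumption of the theorem, with $C>0$ thanks to non-degeneracy. Repeating the argument with $-f$ extends the estimate to all $\lambda\in\R$, so that
$$\Lambda(\lambda):=\lim_{n\to\infty}\frac{1}{s_n}\log\E\exp\Bigl(\lambda s_n\,\tfrac{f(X)-\E f(X)}{t_n}\Bigr)=\tfrac{\lambda^2}{2}\,C,$$
a finite differentiable function of $\lambda$. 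The G\"artner-Ellis theorem then yields the MDP at speed $s_n=a_n^2/n$ with rate function the Legendre transform of $\Lambda$, which coincides with the $I$ in the statement.

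The main obstacle is the bookkeeping around $d_M(n)$: unlike in Theorem \ref{thmcatoni}, the Markov log-Laplace bound does not split the contributions cleanly along the coordinates of $X$, and extra terms indexed by the coupling stopping times $\tau_i$ intervene. Checking that these extra contributions can be absorbed into a bounded constant requires precisely the exponential mixing provided by \eqref{Cat01}--\eqref{Cat02}; without this geometric decay the sums would diverge in $n$ and the cubic remainder would overwhelm the $\lambda^2 C/2$ main term. Once this summability is verified, every remaining step is a mechanical copy of the proofs of Theorems \ref{metathm} and \ref{thmustat}.
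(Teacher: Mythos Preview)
Your approach is essentially the same as the paper's: set $f(X)=\sqrt{n}\,U_n(h)(X)$, $t_n=a_n/\sqrt{n}$, $s_n=a_n^2/n$, invoke Catoni's Markovian log-Laplace bound in place of Theorem~\ref{thmcatoni}, check that the cubic remainder is $O(a_n/n)$, and conclude by G\"artner--Ellis.

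One correction on the form of the remainder. The paper applies \cite[Corollary~3.1]{Catoni:2003}, and the resulting $d(n)$ is \emph{not} simply the independent $d(n)$ reweighted by geometric factors $A\rho^k$; it has the explicit shape
\[
d(n)=\frac{1}{\sqrt{n}}\,\frac{BCA^3}{(1-\rho)^3}\Bigl(\tfrac{\rho\log(\rho^{-1})}{2AB}-\tfrac{s}{\sqrt{n}}\Bigr)_+^{-1}
+\frac{1}{\sqrt{n}}\Bigl(\tfrac{B^3A^3}{3(1-\rho)^3}+\tfrac{4B^2A^3}{(1-\rho)^3}\Bigl(\tfrac{\rho\log(\rho^{-1})}{2AB}-\tfrac{s}{\sqrt{n}}\Bigr)_+^{-1}\Bigr),
\]
where $B,C$ are constants built from the uniform bounds on $\Delta_i f$ and $\Delta_i\Delta_j f$. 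In particular $d(n)$ depends on $s$, so your assertion that $d_M(n)=O(1/\sqrt{n})$ uniformly is not quite right: the $(\,\cdot\,)_+^{-1}$ factor blows up once $s/\sqrt{n}$ approaches $\rho\log(\rho^{-1})/(2AB)$. The paper handles this by observing that at $s=\lambda a_n/\sqrt{n}$ one has $s/\sqrt{n}=\lambda a_n/n\to 0$, so for every fixed $\lambda$ the $(\,\cdot\,)_+^{-1}$ factor is eventually bounded and the whole expression is indeed $O(1/\sqrt{n})$. With that caveat, the rest of your argument goes through exactly as you wrote it.
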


\begin{proof} 
As for the independent case we define
$f(X):=\sqrt{n}U_n(h)(X_1,\dots,X_n)$.
Corollary 3.1 of \cite{Catoni:2003} states, that in the
above situation the inequality
\begin{align}
&\Bigl|\log{\E\exp{\bigl(s f(X)\bigr)}}-s\E\bigl[f(X)\bigr]
-\frac{s^2}{2}\V f(X)\Bigr| \nonumber
\\
\leq&
\frac{s^3}{\sqrt{n}} \frac{BCA^3}{(1-\rho)^3} \left(\frac{\rho \log{(\rho^{-1})}}{2 AB} -\frac{s}{\sqrt{n}}\right)_+^{-1} \nonumber
\\
&{}+
\frac{s^3}{\sqrt{n}} \left(\frac{B^3 A^3}{3(1-\rho)^3}
+\frac{4 B^2 A^3}{(1-\rho)^3}\left(\frac{\rho \log{(\rho^{-1})}}{2 AB} -\frac{s}{\sqrt{n}}\right)_+^{-1}
\right) \nonumber
\end{align}
holds for some constants B and C.
This is the situation of Theorem \ref{metathm} except that in this case
$d(n)$ is defined by
\begin{equation*}
\frac{1}{\sqrt{n}} \frac{BCA^3}{(1-\rho)^3} \left(\frac{\rho \log{(\rho^{-1})}}{2 AB} -\frac{s}{\sqrt{n}}\right)_+^{-1}
+
\frac{1}{\sqrt{n}} \left(\frac{B^3 A^3}{3(1-\rho)^3}
+\frac{4 B^2 A^3}{(1-\rho)^3}\left(\frac{\rho \log{(\rho^{-1})}}{2 AB} -\frac{s}{\sqrt{n}}\right)_+^{-1}
\right)\:.
\end{equation*}
This expression depends on $s$. We apply the adapted Theorem \ref{metathm} for
$s_n=\frac{a_n^2}{n}$, $t_n:= \frac{a_n}{\sqrt{n}}$ and
$s:=\lambda \frac{a_n}{\sqrt{n}}$ as before. 

Because of
$\frac{s}{\sqrt{n}}= \lambda \frac{a_n}{n}
\stackrel{n\to\infty}{\longrightarrow} 0$,
the assumptions of Theorem \ref{metathm} are satisfied:
\begin{enumerate}
\item
$\frac{s_n^2}{t_n^3} d(n)
= \frac{a_n}{n} \frac{BCA^3}{(1-\rho)^3} \left(\frac{\rho \log{(\rho^{-1})}}{2 AB} -\frac{s}{\sqrt{n}}\right)_+^{-1}
+
\frac{a_n}{n} \left(\frac{B^3 A^3}{3(1-\rho)^3}
+\frac{4 B^2 A^3}{(1-\rho)^3}\left(\frac{\rho \log{(\rho^{-1})}}{2 AB} -\frac{s}{\sqrt{n}}\right)_+^{-1}\right)$\\
\quad$\stackrel{n\to\infty}{\longrightarrow} 0 \:.$
\item
$\frac{s_n}{t_n^2} \V f(X) = \V\bigl(\sqrt{n} U_n(h)(X)\bigr)<\infty$ as assumed.
\end{enumerate}
Therefore we can use the G\"artner-Ellis theorem to prove a moderate
deviation principle for $(\frac{n}{a_n} U_n(h)(X))_n$.
\end{proof}

\begin{cor}
Let $(X_k)_{k\in{\mathbb N}}$ be a strictly stationary, aperiodic and
irreducible Markov chain which finite state space and
$U_n(h)(X)$ be a non-degenerate U-statistic based on
a bounded kernel $h$ of degree two. 
Then $(\frac{n}{a_n} U_n(h)(X))_n$ satisfies the MDP
with speed and rate function as in Theorem \ref{thmmarkov}.
\end{cor}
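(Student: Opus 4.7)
The plan is to verify the three hypotheses of Theorem \ref{thmmarkov} in the finite-state setting and then invoke that theorem directly.

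First I would establish the coupling inequalities \eqref{Cat01} and \eqref{Cat02}. A strictly stationary, aperiodic, irreducible Markov chain on a finite state space is uniformly ergodic: by standard Perron--Frobenius / Doeblin theory, there exist a constant $A>0$ and $\rho\in(0,1)$ such that the $n$-step transition kernels converge geometrically to the stationary distribution in total variation. Hence one can construct the coupled processes $\stackrel{i}{Y}$ of \cite{Catoni:2003} (for example by running two copies of the chain independently until they hit the same state, then letting them coincide, or by using the Doeblin minorization to couple with geometric success probability at each step). The resulting coupling times $\tau_i$ then satisfy the required geometric tail bounds \eqref{Cat01} and \eqref{Cat02}, uniformly in the conditioning $\sigma$-algebras, because the minorization constant and contraction rate depend only on the fixed transition matrix and not on the initial state.

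Next I would check the variance condition $\lim_{n\to\infty}\V(\sqrt{n}\,U_n(h)(X))<\infty$. For a bounded kernel $h$ of degree two and a uniformly ergodic Markov chain, the Hoeffding-type decomposition combined with geometric mixing yields that the variance of $\sqrt{n}\,U_n(h)(X)$ converges to the usual asymptotic variance $4\sigma^2_\infty$, where $\sigma^2_\infty$ is the limiting variance of the linear (projection) part $\frac{1}{\sqrt{n}}\sum_{i=1}^n h_1(X_i)$ under the stationary law. Since $h_1$ is bounded and the chain is geometrically ergodic, this limit is finite by the standard CLT for functionals of uniformly ergodic Markov chains (absolutely summable covariances). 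Non-degeneracy of $U_n(h)$ is assumed as a hypothesis of the corollary.

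With these three ingredients in place, Theorem \ref{thmmarkov} applies directly to yield the MDP for $\bigl(\tfrac{n}{a_n}U_n(h)(X)\bigr)_n$ with speed $s_n=a_n^2/n$ and the rate function displayed there, completing the proof. The only step requiring genuine work is the first: carefully setting up coupled processes $\stackrel{i}{Y}$ that satisfy \emph{both} \eqref{Cat01} and \eqref{Cat02}, i.e., whose coupling times have geometric tails conditionally on \emph{either} the coupled path or the original path. This is the technical obstacle, but in the finite-state uniformly ergodic case it reduces to the classical Doeblin coupling construction and is standard; the remaining verifications are routine consequences of boundedness of $h$ and geometric ergodicity.
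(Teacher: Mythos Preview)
Your proposal is correct and follows essentially the same strategy as the paper: verify the hypotheses of Theorem \ref{thmmarkov} and invoke it. The only difference is in the references used---the paper obtains the exponential coupling bounds \eqref{Cat01}, \eqref{Cat02} by citing exponential decay of the absolute regularity ($\beta$-mixing) coefficient for finite-state aperiodic irreducible chains (Bradley), whereas you argue directly via Doeblin minorization and classical coupling; similarly, the paper cites Lee for the finiteness of $\lim_n \V(\sqrt{n}\,U_n(h))$ rather than sketching the Hoeffding-projection argument.
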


\begin{proof}
The Markov chain is strong mixing and the absolute
regularity coefficient $\beta(n)$ converges to 0 at least exponentially fast as
$n$ tends to infinity, see \cite{Bradley:2005}, Theorem 3.7(c). Hence the
equations \eqref{Cat01} and \eqref{Cat02} are satisfied and Theorem
\ref{thmmarkov} can be applied. The limit of the variance of $\sqrt{n}
U_n(h)$ is bounded, see \cite{Lee:1990}, 2.4.2 Theorem 1, which
proves the MDP for this example.
\end{proof}
\medskip

For Doeblin recurrent and aperiodic Markov chains the MDP for additive functionals of
a Markov process is proved in
\cite{Wu:1995}. In fact Wu proves the MDP under the condition that
$1$ is an isolated and simple eigenvalue of
the transition probability kernel satisfying that it is the only
eigenvalue with modulus $1$.
For a continuous spectrum of the transition probability kernel
Delyon, Juditsky and Lipster present in \cite{DelyonJuditskyLipster:2006}
a method for objects of the form
$$
\frac{1}{n^{\alpha}} \sum_{i=1}^n H(X_{i-1}), \,\, \frac{1}{2}<\alpha<1, n\geq 1,
$$
where $(X_i)_{i\geq 0}$ is a homogeneous ergodic Markov chain
and the vector-valued function $H$ satisfies a Lipschitz continuity.
To the best of our knowledge, we proved the first MDP for a $U$-statistic
with Markovian entries.

%%%%%%%%%%%%%%%%%%%%%%%%%%%%%%%%%%%%%%%%%%%%%%%%%%%%%%%%%%%%%
\section{Proof of Theorem \ref{theorem1} and \ref{concentration}}\label{sectionsubgraphs}

\begin{lemma}\label{lemma1}
The standardized subgraph count statistic $Z$ satisfies
the inequalities
\begin{eqnarray}\label{dsub}
&\Delta_i Z \leq \frac{1}{\sqrt{\left({{n}\atop{2}}\right)p(1-p)} p^{k-1}}
\\\label{ddsub}
&\sum_{i=1}^{\left(n\atop2\right)} \sum_{j=1}^{i-1} \Delta_i \Delta_j Z \leq
\frac{1}{c_{n,p}} \left(n\atop2\right)\left({n-2}\atop{l-2}\right)
(l-2)^2 (l-2)!
\end{eqnarray}
\end{lemma}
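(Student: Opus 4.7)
The plan is to compute the first and second partial differences of $W$ explicitly from its definition and then reduce the two inequalities to standard subgraph counting. Since $Z=(W-\E W)/c_{n,p}$ and $\E W$ is deterministic, $\Delta_i Z = c_{n,p}^{-1}\Delta_i W$ and $\Delta_i\Delta_j Z = c_{n,p}^{-1}\Delta_i\Delta_j W$, so it suffices to control the differences of $W$ itself.

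For an individual summand $T_\kappa := 1_{\{(e_{\kappa_1},\dots,e_{\kappa_k})\sim G\}}\prod_{r=1}^k X_{\kappa_r}$, a direct calculation yields
$$
\Delta_i T_\kappa = 1_{\{\kappa\sim G,\, i\in\kappa\}}\,(X_i-Y_i)\prod_{r:\kappa_r\neq i}X_{\kappa_r},
$$
and, for $j<i$,
$$
\Delta_i\Delta_j T_\kappa = 1_{\{\kappa\sim G,\, i,j\in\kappa\}}\,(X_i-Y_i)(X_j-Y_j)\prod_{r:\kappa_r\notin\{i,j\}}X_{\kappa_r}.
$$
Using $|X_\bullet - Y_\bullet|\leq 1$ together with $X_{\kappa_r}\in\{0,1\}$, each such term has modulus at most the corresponding indicator. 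Summing over ordered tuples $\kappa$ therefore gives $|\Delta_i W|\leq N_G(e_i)$ and $|\Delta_i\Delta_j W|\leq N_G(e_i,e_j)$, where $N_G(\cdot)$ denotes the number of subgraphs of $K_n$ isomorphic to $G$ containing the prescribed edge(s).

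For inequality \eqref{dsub} I would invoke the standard double count: each of the $k$ edges of $G$ can be mapped onto $e_i$ in two orientations, after which the remaining $l-2$ vertices are injected into $n-2$ positions in $\binom{n-2}{l-2}(l-2)!$ ways, the overall overcounting factor being $a=\mathrm{aut}(G)$. Thus $N_G(e_i)=\frac{2k}{a}\binom{n-2}{l-2}(l-2)!$, which is precisely the combinatorial factor appearing in $c_{n,p}$ in \eqref{NWnorm}; dividing cancels it and leaves $1/\bigl(\sqrt{\binom{n}{2}p(1-p)}\,p^{k-1}\bigr)$.

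For \eqref{ddsub} I would interchange the order of summation and observe that each copy of $G$ in $K_n$ contributes $k(k-1)/2$ unordered edge pairs, so that
$$
\sum_{i=1}^{\binom{n}{2}}\sum_{j=1}^{i-1} N_G(e_i,e_j) = \frac{k(k-1)}{2}\cdot\frac{\binom{n}{l}l!}{a}.
$$
Since $\binom{n}{l}l! = 2\binom{n}{2}\binom{n-2}{l-2}(l-2)!$, this reduces to $\binom{n}{2}\binom{n-2}{l-2}(l-2)!\cdot k(k-1)/a$; a routine combinatorial estimate then absorbs the graph-dependent factor $k(k-1)/a$ into a constant multiple of $(l-2)^2$, and division by $c_{n,p}$ yields \eqref{ddsub}. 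The only slightly delicate point is this last combinatorial inequality, but since it only introduces a constant depending on $G$ it is swallowed by the scaling hypotheses on $\beta_n$ in the downstream application to Theorem \ref{theorem1}.
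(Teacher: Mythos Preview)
Your argument for \eqref{dsub} is exactly the paper's: count copies of $G$ through a fixed edge via the formula $\tfrac{2k}{a}\binom{n-2}{l-2}(l-2)!$ and divide by $c_{n,p}$.

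For \eqref{ddsub} you take a genuinely different route. The paper splits into the two cases ``$e_i,e_j$ share a vertex'' and ``$e_i,e_j$ are disjoint'', bounds $N_G(e_i,e_j)$ in each case separately by $2(l-2)!\binom{n-3}{l-3}$ and $2(l-2)!\binom{n-4}{l-4}$, and then multiplies by the number of index pairs of each type. Your global double count
\[
\sum_{i}\sum_{j<i} N_G(e_i,e_j)=\binom{k}{2}\cdot\frac{\binom{n}{l}l!}{a}
=\binom{n}{2}\binom{n-2}{l-2}(l-2)!\cdot\frac{k(k-1)}{a}
\]
is cleaner and in fact yields the \emph{exact} value of the double sum, whereas the paper's case analysis produces only an upper bound. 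What the paper's approach buys is that the constant comes out as a simple expression in $l$ alone; what yours buys is a one-line identity with no case distinction.

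The one genuine gap is your last step. You cannot absorb $k(k-1)/a$ into a universal multiple of $(l-2)^2$: for asymmetric graphs (so $a=1$) with $k$ of order $\binom{l}{2}$ one has $k(k-1)/a\asymp l^4$ while $(l-2)^2\asymp l^2$. So you have proved
\[
\sum_i\sum_{j<i}\Delta_i\Delta_j Z\le \frac{1}{c_{n,p}}\binom{n}{2}\binom{n-2}{l-2}(l-2)!\cdot\frac{k(k-1)}{a},
\]
which is a correct inequality with the right $n$- and $p$-dependence, but not the inequality \eqref{ddsub} as written. Your remark that the discrepancy is irrelevant for Theorem~\ref{theorem1} is accurate: in the bound for $d(n)$ the constant enters only through the factor $\tfrac13+\tfrac{(l-2)^2 a}{2k}$, and replacing $(l-2)^2$ by $k(k-1)/a$ there still gives a quantity depending only on $G$, so conditions~(1) and~(2) of Theorem~\ref{metathm} go through unchanged.
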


\begin{proofsect}{Proof of Lemma \ref{lemma1}}
As the first step we will find an upper bound for
\begin{equation*}
\Delta_i Z = Z- \frac{1}{c_{n,p}} \sum_{1\leq \kappa_1<\dots < \kappa_k \leq \left({{n}\atop{2}}\right)}
  1_{\{(e_{\kappa_1},\dots,e_{\kappa_k})\sim G\}} \left(
    \prod_{j=1}^k X_{i, \kappa_j} - p^k\right)\:,
\end{equation*}
where $(X_{i,1}, X_{i,2}, \dots,X_{i, \left({{n}\atop{2}}\right)})=(X_1,\dots,X_{i-1},Y_i,X_{i+1},\dots,X_{\left({{n}\atop{2}}\right)})$
and $Y_i$ is an independent copy of $X_i$,
$i\in\{1, \dots,\left({{n}\atop{2}}\right)\}$.
The difference consists only of those summands which contain the random variable $X_i$ or $Y_i$.
The number of subgraphs isomorphic to $G$ and containing a fixed edge,
is given by
$$
\left({{n-2}\atop{l-2}}\right)  \frac{2k}{a} (l-2)! \:,
$$
see \cite{NowickiWierman:1988}, p.307. Therefore we can estimate
\begin{equation}\label{Deltazaehlgraph}
\big|\Delta_i Z\big| \leq
\frac{1}{\sqrt{\left({{n}\atop{2}}\right)p(1-p)} p^{k-1}}\:.
\end{equation}

For the second step we have to bound the partial difference of order two of the
subgraph count statistic.
\begin{eqnarray*}
&&\Delta_i \Delta_j Z
\\
&=&
\frac{1}{c_{n,p}}
\sum_{
{1\leq \kappa_1<\dots<\kappa_{k-2}}\atop{\kappa_1,\dots,\kappa_{k-2}\not= i,j}
}
1_{\{(e_i,e_j,e_{\kappa_1},\dots,e_{\kappa_{k-2}})\sim G\}}
\prod_{m=1}^{k-2} X_{\kappa_m} X_j (X_i-Y_i)
\nonumber\\
&&{}-
\frac{1}{c_{n,p}}
\sum_{
{1\leq \kappa_1<\dots<\kappa_{k-2}}\atop{\kappa_1,\dots,\kappa_{k-2}\not= i,j}
}
1_{\{(e_i,e_j,e_{\kappa_1},\dots,e_{\kappa_{k-2}})\sim G\}}
\prod_{m=1}^{k-2} X_{\kappa_m} Y_j (X_i-Y_i)
\\
&=&
\frac{1}{c_{n,p}}
\sum_{
{1\leq \kappa_1<\dots<\kappa_{k-2}}\atop{\kappa_1,\dots,\kappa_{k-2}\not= i,j}
}
1_{\{(e_i,e_j,e_{\kappa_1},\dots,e_{\kappa_{k-2}})\sim G\}}
\prod_{m=1}^{k-2} X_{\kappa_m} (X_j-Y_j) (X_i-Y_i)
\end{eqnarray*}
Instead of directly bounding the random variables we first care on
cancellations due to the indicator function.
We use the information about the fixed graph $G$.
To do this we should distinguish the case, whether $e_i$
and $e_j$ have a common vertex.
\begin{itemize}
\item $e_i$ and $e_j$ have a common vertex:\\
Because $G$ contains $l$ vertices, we have
$\left( {n-3}\atop{l-3}\right)$ possibilities to fix all vertices
of the subgraph isomorph to $G$ and including the edges $e_i$ and $e_j$.
The order of the vertices is important and so we have to take the factor
$2(l-2)!$ into account.

\item $e_i$ and $e_j$ have four different vertices:\\
Four fixed vertices allow us to choose only 
$\left({n-4}\atop{l-4}\right)$ more. The order of the vertices and the
relative position of $e_i$ and $e_j$ are relevant. So as before the
factor is given by $2(l-2)!$.
\end{itemize}

Bounding the random variables $X_i,Y_i$,
$i\in\{1,\dots,\left(n\atop2\right)\}$ by $1$,
we achieve the following estimation:
\begin{eqnarray}
&&\sum_{i=1}^{\left(n\atop2\right)} \sum_{j=1}^{i-1} \Delta_i \Delta_j Z
\label{doppeldelta}\\
&\leq&
\frac{\left(n\atop2\right)}{c_{n,p}}  \left(
4(n-2)\left( {n-3}\atop{l-3}\right)
+ (n-2)(n-3)\left({n-4}\atop{l-4}\right)
\right) (l-2)!
\label{ineqdoppeldelta}\\
&=&\frac{1}{c_{n,p}} \left(n\atop2\right)\left({n-2}\atop{l-2}\right)
(l-2)^2 (l-2)!\ .\label{schritt2zaehlst}
\end{eqnarray}
To bound $\sum_{j=1}^{i-1} \Delta_i \Delta_j Z$ for $i$ fixed
in \eqref{doppeldelta} one has to observe that there
are at most $2(n-2)$ indices $j<i$, such that $e_i$ and $e_j$
have a common vertex, and
$\frac{1}{2} (n-2)(n-3)=\left(n\atop2\right) - 2(n-2) -1$
indices $j$, such that $e_i$ and $e_j$ have no common vertex.
This proves the inequality \eqref{ineqdoppeldelta} and hence
\eqref{schritt2zaehlst} follows.
\qed
\end{proofsect}

To apply Theorem \ref{metathm} we choose
$s_n= \frac{\left(\frac{2k}{a} (l-2)!\right)^2 \beta_n^2}{c_{n,p}^2}$
and $t_n= \frac{\beta_n}{c_{n,p}}$. 
\begin{equation*}
\frac{s_n}{t_n^2} \V Z
= \left(\frac{2k}{a} (l-2)!\right)^2 \V Z
\stackrel{n\to\infty}{\longrightarrow} \left(\frac{2k}{a} (l-2)!\right)^2 
\:,
\end{equation*}
because $\lim_{n\to\infty} \V Z=1$, see \cite{NowickiWierman:1988}.
We need Lemma \ref{lemma1} to bound $d(n)$:
\begin{eqnarray} \label{dnbound}
d(n)&\stackrel{\eqref{dsub}}{\leq}&
\frac{1}{ \left({n}\atop{2}\right) p^{2k-1} (1-p)}
\sum_{i=1}^{\left({n}\atop{2}\right)}
\left(
\frac{1}{3 \sqrt{\left({n}\atop{2}\right)} p^{k-1/2} (1-p)^{1/2}}
+
\sum_{j=1}^{i-1} \Delta_i \Delta_j Z
\right) \nonumber
\\
&\stackrel{\eqref{ddsub}}{\leq}&
\frac{1}{ \left({n}\atop{2}\right) p^{2k-1} (1-p)}
\left(
\frac{\sqrt{\left({n}\atop{2}\right)}}{3 p^{k-1/2} (1-p)^{1/2}}
+
\frac{ \left(n\atop2\right)\left({n-2}\atop{l-2}\right)
(l-2)^2 (l-2)!}{c_{n,p}}
\right) \nonumber
\\
&=&\frac{1}{\sqrt{\left({n}\atop{2}\right)}}
\frac{1}{p^{3(k-1/2)} (1-p)^{3/2}}
\left(\frac{1}{3}+ \frac{(l-2)^2 a}{2k}\right)
\end{eqnarray}
And condition \ref{drei} of Theorem \ref{metathm} follows from
\begin{align}
\frac{s_n^2}{t_n^3} d(n)
&\leq
\beta_n \frac{1}{\left({n-2}\atop{l-2}\right) \left({n}\atop{2}\right)}
\frac{1}{p^{4k-2} (1-p)^2} \left(\frac{1}{3}+ \frac{(l-2)^2 a}{2k}\right)
\left(\frac{2k}{a}(l-2)!\right)^3 \nonumber
\\
&
\stackrel{n\to\infty}{\longrightarrow} 0\quad\text{, if}\quad
\beta_n \ll n^{l} p^{4k-2} (1-p)^2\text{ as assumed.} \nonumber
\end{align}
$\frac{s_n^2}{t_n^3} d(n)$ is positive and therefore the limit
of $n$ to infinity is zero, too. With Theorem \ref{metathm} we proved Theorem
\ref{theorem1}.

\begin{remark}\label{CLTsubgraph}
The estimation by Catoni, see Theorem \ref{thmcatoni}, and Lemma \ref{lemma1}
allow us to give an alternative proof for the central limit theorem of the
subgraph count statistic $Z$, if
$n p^{3(k-\frac{1}{2})}\stackrel{n\to\infty}{\longrightarrow}  \infty$
and $n^2 (1-p)^{3/2} \stackrel{n\to\infty}{\longrightarrow} \infty$.
On these conditions it follows, that $d(n)\stackrel{n\to\infty}{\longrightarrow}0$,
and it is easy to calculate the following limits:
\begin{eqnarray*}
&&\lim_{n\to\infty} \E e^{\lambda Z} = e^{\frac{\lambda^2}{2} \lim_{n\to\infty} \V Z}
= e^{\frac{\lambda^2}{2}} <\infty \quad \text{for all} \, \lambda>0
\\
&&\text{and additionally}\quad
\lim_{\lambda \nearrow 0} \lim_{n\to\infty} \E e^{\lambda Z}= 1\:.
\end{eqnarray*}
Hence the central limit theorem results from the continuity theorem.
Both conditions are stronger than the one in \cite{NowickiWierman:1988}.
\end{remark}

\begin{proofsect}{Proof of Theorem \ref{concentration}}
We apply Theorem \ref{thmcatoni} and the Chebychev inequality to get
$$
P(Z \geq \varepsilon ) \leq 
\exp\left(-s \varepsilon + \frac{s^2}{2} \V Z + s^3 d(n)\right)
$$
for all $s>0$ and all $\varepsilon >0$.
Choosing $s=\frac{\varepsilon}{\V Z +\frac{2 d(n) \varepsilon}{\V Z}}$
implies
\begin{equation} \label{conz1}
P(Z \geq \varepsilon)
\leq
\exp\left(-\frac{\varepsilon^2}{2 \bigl( \V Z + \frac{ 2 d(n) \varepsilon}{\V Z}\bigr)}\right)\:.
\end{equation}
Applying Theorem \ref{thmcatoni} to $-Z$ gives
$$
P(Z \leq - \varepsilon)
\leq
\exp\left(-\frac{\varepsilon^2}{2 \bigl( \V Z + \frac{ 2 d(n) \varepsilon}{\V Z}\bigr)}\right)\:.
$$
Now we consider an upper bound for the upper tail $P(W -\E W \geq \varepsilon \, \E W) = P\bigl( Z \geq \frac{\varepsilon \, \E W}{c_{n,p}} \bigr)$.
Using $\V Z = c_{n,p}^{-2} \V W$, inequality \eqref{conz1} leads to
$$
P( W - \E W \geq \varepsilon \, \E W) \leq \exp \biggl( - \frac{\varepsilon^2 (\E W)^2}{2 \V W + 4 \varepsilon d(n) \E W c_{n,p}^3 (\V W)^{-1}} \biggr).
$$ 
Indeed, this concentration inequality holds for $f(X) - \E f(X)$ in Theorem \ref{metathm} with $d(n)$ given as 
in \eqref{defdn}.
We restrict our calculations to the subgraph-counting statistic $W$. We will use the following bounds for
$\E W$, $\V W$ and $c_{n,p}$: there are constants, depending only on $l$ and $k$, such that
$$
const. \, n^l \, p^k \leq \E W \leq const. \, n^l \, p^k,
$$
$$
const. \, n^{2l-2} p^{2k-1} (1-p) \leq \V W \leq const. \, n^{2l-2} p^{2k-1} (1-p)
$$
(see \cite[2nd section]{Rucinski:1988}), and
$$
const.\, n^{l-1} p^{k-1/2} (1-p)^{1/2} \leq c_{n,p} \leq const. \, n^{l-1} p^{k-1/2} (1-p)^{1/2}.
$$
Using the upper bound \eqref{dnbound} for $d(n)$, we obtain
$$
P( W - \E W \geq \varepsilon \, \E W) \leq \exp \biggl( -\frac{const. \varepsilon^2 n^{2l} p^{2k}}{n^{2l-2} p^{2k-1} (1-p) + 
const. \varepsilon n^{2l-2} p^{-k+1} (1-p)^{-1}} \biggr),
$$
which proves Theorem \ref{concentration}.
\qed
\end{proofsect}

\bigskip
{\it Acknowledgment:}
The first author has been supported by {\it Studienstiftung des deutschen Volkes.}
\nocite{*}
\bibliographystyle{alpha}
\bibliography{lit2}

\end{document}